\newtheorem{theorem}{Theorem}[section]
\newtheorem{lemma}[theorem]{Lemma}
\newtheorem{definition}[theorem]{Definition}
\newtheorem{proposition}[theorem]{Proposition}
\newtheorem{corollary}[theorem]{Corollary}
\newtheorem{remark}[theorem]{Remark}
\newtheorem{conjecture}[theorem]{Conjecture}
\newcommand{\RR}{\mathbb{R}}
\newcommand{\FF}{\mathbb{F}}
\newcommand{\PP}{\mathbb{P}}
\newcommand{\F}{\mathbb{F}_{p}^{n}}
\newcommand{\E}{\widehat{E}}
\newcommand{\m}{|}
\newcommand{\e}{\mathcal{E}}
\newcommand{\ee}{e^{-\frac{2\pi i  x \cdot\xi}{p}}}
\begin{document}

\title{Projections in vector spaces over finite fields}

\author{Changhao Chen}

\address{School of Mathematics and Statistics, The University of New South Wales, Sydney NSW 2052, Australia }
\email{changhao.chenm@gmail.com}

\date{\today}

\subjclass[2010]{05B25, 52C99}
\keywords{Exceptional sets of projections, finite fields, random subsets, Fourier transform}

%\thanks{The author acknowledges the support of the Vilho, Yrj\"o, and Kalle V\"ais\"al\"a foundation.}

\begin{abstract}
We study the projections in vector spaces over finite fields. We prove finite fields  analogues of the bounds on the dimensions of the exceptional sets  for Euclidean projection mapping. We provide examples which do not have  exceptional projections via  projections of random sets. In the end, we study  the projections of sets  which have the  (discrete) Fourier decay.    
\end{abstract}

%%%%%%%%%%%%%%%%%%%%%%%%%%%%%%%%%%%%%%%%%%%%%%%
%%%%%%%%%%%%%%%%%%%%%%%%%%%%%%%%%%%%%%%%

\maketitle

\section{Introduction}

A fundamental problem in fractal geometry is that how the projections affect dimension. Recall the classical Marstand-Mattila projection theorem: Let $E\subset \RR^{n}, n\geq2,$ be a Borel set with Hausdorff dimension $s$. 
\begin{itemize}
\item If $s\leq m$, then the orthogonal projection of $E$ onto almost all $m$-dimensional subspaces has Hausdorff dimension $s$.

\item If $s>m$, then the orthogonal of $E$ onto almost all $m$-dimensional subspaces has positive $m$-dimensional Lebesgue measure. 
\end{itemize}
In 1954 Marstand \cite{Marstrand} proved this projection theorem
in the plane. In 1975 Mattila \cite{Mattila1975} 
proved this  for general dimension via   1968  Kaufman's  \cite{Kaufman} potential theoretic methods. Furthermore  the bounds on the dimensions of the exceptional sets  of projections was well studied. We put these results in the following form Theorem \ref{thm:M} which come from  a recent survey paper of Falconer, Fraser, and Jin \cite{Falconer}, and Mattila \cite[Corollary 5.12]{Mattila}. 
Theorem \ref{thm:M} $(a)$ was proved by Kaufman \cite{Kaufman} for $m=1, n=2$, and by Mattila \cite{Mattila1975} for general $1\leq m<n$. The estimate  $(b)$ was proved by Falconer \cite{Falconer1982} and all known proofs depend on Fourier transform. The estimate $(c)$ was proved by Peres and Schlag \cite{Peres} under their generalized projections.

\begin{theorem}[Bounds on the dimensions of the exceptional sets]\label{thm:M}
Let $E \subset \RR^{n}$ be a Borel set and $s=\dim E$.

(a) If $s\leq m$ and $t \in (0, s]$, then
\[
\dim \{V \in  G(n,m) : \dim \pi_{V} (E) < t \} \leq m(n - m) -(m-t).
\]

(b) If $s> m$, then
\[
\dim \{V \in G(n,m) : \mathcal{H}^{m}(\pi_{V} (E)) = 0\} \leq  m(n - m) -(s-m).
\]

(c) If $s>2m,$ then 
\[
\dim \{V \in G(n,m) : \pi_{V}(E) \text{ has empty interior }\} \leq  m(n - m) -(s-2m).
\]
\end{theorem}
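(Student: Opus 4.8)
The plan is to derive all three estimates by the classical combination of Frostman's lemma, energy integrals (in physical space for (a), on the Fourier side for (b) and (c)), and a dimension estimate for the auxiliary measure on $G(n,m)$. Throughout, $\gamma_{n,m}$ denotes the orthogonally invariant probability measure on $G(n,m)$, $\pi_V\colon\RR^n\to V$ orthogonal projection, $I_t(\lambda)=\iint|x-y|^{-t}\,d\lambda(x)\,d\lambda(y)$ the $t$-energy, and we recall $I_t(\lambda)=c_{n,t}\int_{\RR^n}|\hat\lambda(\xi)|^2|\xi|^{t-n}\,d\xi$ for $0<t<n$. First I would reduce to compact $E$ and fix, by Frostman's lemma, a probability measure $\mu$ on $E$ with $\mu(B(x,r))\le C r^{t'}$ for $t'<s$ as close to $s$ as needed. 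Then, for each part, writing $\mathcal{B}$ for the exceptional set and $\beta_0$ for the asserted bound, I would suppose $\dim\mathcal{B}>\beta_0$ and take (again by Frostman) a probability measure $\nu$ on $\mathcal{B}$ with $\nu(B(V,r))\le C r^{\beta}$ for some $\beta>\beta_0$. The strategy is then to build a functional $\Phi(V)\in[0,\infty]$ whose finiteness contradicts $V\in\mathcal{B}$, and to prove $\int\Phi\,d\nu<\infty$; this forces $\Phi(V)<\infty$ for $\nu$-a.e.\ $V$, a contradiction, and a countable exhaustion as $t'\uparrow s$ then removes the loss.

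For (a) I would take $\Phi(V)=I_t(\pi_{V\ast}\mu)$, whose finiteness yields $\dim\pi_V(E)\ge\dim(\mathrm{supp}\,\pi_{V\ast}\mu)\ge t$. By Fubini, $\int\Phi\,d\nu=\iint\big(\int_{G(n,m)}|\pi_V(x-y)|^{-t}\,d\nu(V)\big)\,d\mu(x)\,d\mu(y)$, so the matter reduces to
\[
\int_{G(n,m)}|\pi_V z|^{-t}\,d\nu(V)\ \le\ C\,|z|^{-t}\qquad(z\neq0),
\]
valid as soon as $\nu(B(V,r))\le C r^{\beta}$ with $\beta>m(n-m)-(m-t)$. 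This holds because $\{V:\pi_V z=0\}=\{V:V\subseteq z^{\perp}\}$ is a copy of $G(n-1,m)$, of codimension $m$ in $G(n,m)$, so its $r$-neighbourhood is covered by $\lesssim r^{-(m(n-m)-m)}$ balls of radius $r$ and thus carries $\nu$-mass $\lesssim r^{\beta-(m(n-m)-m)}$; a layer-cake integration converges precisely when $\beta-(m(n-m)-m)>t$. Inserting this, $\int\Phi\,d\nu\le C\,I_t(\mu)<\infty$ once $t'\ge t$, which proves (a).

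For (b), since $s>m$ I would take $t'>m$, so $I_m(\mu)<\infty$, and pass to the Fourier side. As $\widehat{\pi_{V\ast}\mu}(\eta)=\hat\mu(\eta)$ for $\eta\in V$, I put $\Phi(V)=\|\pi_{V\ast}\mu\|_{L^2}^2=\int_V|\hat\mu(\eta)|^2\,d\mathcal{H}^m(\eta)$; finiteness gives $\pi_{V\ast}\mu\in L^2$, hence $\mathcal{H}^m(\pi_V(E))>0$. Disintegrating $\RR^n$ into the $m$-planes through the origin gives the classical identity $\int_{G(n,m)}\int_Vf\,d\mathcal{H}^m\,d\gamma_{n,m}(V)=c\int_{\RR^n}f(\xi)|\xi|^{m-n}\,d\xi$, so for $\nu=\gamma_{n,m}$ the average of $\Phi$ is $c\,I_m(\mu)<\infty$. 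For a general $\nu$ of exponent $\beta$ the needed inequality is
\[
\int_{G(n,m)}\int_V|\hat\mu(\eta)|^2|\eta|^{a}\,d\mathcal{H}^m(\eta)\,d\nu(V)\ \le\ C\,I_{a+m+(m(n-m)-\beta)}(\mu)\qquad(a\ge0),
\]
in which the dimension deficit $m(n-m)-\beta$ of $\nu$ is absorbed by raising the order of the energy of $\mu$; the degeneracy locus is now $\{V:\xi\in V\}\cong G(n-1,m-1)$, of codimension $n-m$, and the covering bound for $\nu$ on its neighbourhoods is what produces the extra energy exponent. Taking $a=0$, the right-hand side is $I_{m+(m(n-m)-\beta)}(\mu)$, finite for a suitable $t'\uparrow s$ exactly when $m+(m(n-m)-\beta)<s$, i.e.\ $\beta>m(n-m)-(s-m)$, the desired contradiction; this is (b). For (c) I would upgrade $L^2$ to a continuous density: if $\widehat{\pi_{V\ast}\mu}\in L^1(V)$, then $\pi_{V\ast}\mu$ has a continuous density, positive on a nonempty open set, which for compact $E$ lies inside $\pi_V(E)$. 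I put $\Phi(V)=\int_V|\hat\mu(\eta)|\,d\mathcal{H}^m(\eta)$, split at $|\eta|=1$ (the inner part contributes at most $\mathcal{H}^m(V\cap B(0,1))\le c$), and on $\{|\eta|>1\}$ apply Cauchy--Schwarz to get $\big(\int_V|\hat\mu(\eta)|^2|\eta|^{a}\,d\mathcal{H}^m\big)^{1/2}\big(\int_{V,\,|\eta|>1}|\eta|^{-a}\,d\mathcal{H}^m\big)^{1/2}$, with the second factor a finite constant iff $a>m$; integrating in $\nu$, applying Cauchy--Schwarz in $\nu$, and using the displayed inequality controls $\int\Phi\,d\nu$ by $I_{a+m+(m(n-m)-\beta)}(\mu)^{1/2}$ up to a constant, finite iff $a+m+(m(n-m)-\beta)<s$. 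The two constraints on $a$ are jointly satisfiable exactly when $\beta>m(n-m)-(s-2m)$ (and, for $\nu=\gamma_{n,m}$, precisely when $s>2m$), giving (c).

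I expect the main obstacle to be the displayed Grassmannian energy inequality underlying (b) and (c): one must play off the concentration of $|\hat\mu|^2$ onto small sets, which is limited only by the finiteness of the energy of $\mu$, against the concentration of $\int_{G(n,m)}(\mathcal{H}^m|_V)\,d\nu(V)$ onto tubular neighbourhoods of sub-Grassmannians, which is limited by the exponent $\beta$, and extract the sharp energy order $a+m+(m(n-m)-\beta)$ with no loss. I anticipate doing this by smoothing $\mu$ at dyadic frequency scales, proving the inequality for the regularisations with constants uniform in the scale, and then taking the limit, all while maintaining sharp covering bounds for $\nu$ on neighbourhoods of the loci $\{V:V\subseteq z^{\perp}\}$ and $\{V:\xi\in V\}$ of codimensions $m$ and $n-m$ respectively. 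By contrast the geometric estimate in (a) is comparatively soft, and the remaining ingredients — Frostman's lemma, the reduction to compact sets, the passage from $\dim(\mathrm{supp}\,\pi_{V\ast}\mu)$ to $\dim\pi_V(E)$, and the limiting step $t'\uparrow s$ — are routine.
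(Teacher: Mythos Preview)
The paper does not prove Theorem~\ref{thm:M}: it is stated as background, with parts (a), (b), (c) attributed to Kaufman/Mattila, Falconer, and Peres--Schlag respectively, and the reader is referred to \cite{Falconer}, \cite[Corollary~5.12]{Mattila} for details. The paper's own contribution is the finite-field analogue (Theorem~\ref{thm:maintheorem} and Corollary~\ref{cor:mainclaim}), so there is no in-paper proof of Theorem~\ref{thm:M} to compare your proposal against.

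That said, your outline is precisely the classical route taken in the cited references: Frostman measures and physical-space energy for (a), Fourier-side $L^2$ density for (b), and $L^1$-Fourier/continuous-density for (c), with the exceptional-set dimension entering through a Frostman measure $\nu$ on $G(n,m)$ and covering bounds for tubular neighbourhoods of the sub-Grassmannians $\{V:V\subseteq z^{\perp}\}$ and $\{V:\xi\in V\}$. The one place your sketch is genuinely incomplete is the displayed ``Grassmannian energy inequality'' for (b) and (c), which you flag yourself; in the literature this is handled (for general $m$) by the transversality machinery of Peres--Schlag \cite{Peres} rather than by an ad hoc dyadic smoothing, and getting the sharp exponent $a+m+(m(n-m)-\beta)$ directly from covering estimates as you propose would require care. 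Everything else in your plan is standard and correct.
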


Here $G(n,m)$, called Grassmanian manifold, denotes the collection of all the $m$-dimensional linear subspaces of $\mathbb{R}^{n}$. The notation $\dim E$ denotes the Hausdorff dimension of the set $E$, and $\pi_{V}: \RR^{n}\rightarrow V$ stands for the orthogonal projections onto $V$. Recalling that the Grassmanian  manifold $G(n,m)$ has dimension $m(n-m)$, this is why we compare with it in the above estimates. Note that the upper bound in $(b)$ is sharp, while the knowledge for the upper bounds in $(a)$ and $(c)$ are not complete, see Mattila \cite[Chapter 5]{Mattila} for more details.

Recently there has been a growing interest in studying finite field version of some classical problems arising from Euclidean spaces. For instance, there are finite field  Kakeya sets (also called Besicovitch sets) \cite{Dvir}, \cite{Green}, \cite{Wolff}, \cite{Zhang}, there are  finite field Erd\H{o}s/ Falconer distance problem \cite{IsoevichRudnev}, \cite{Tao1}, etc.  

%For these finite fields  setting, things are often become simper (at least technically) than the original one, and most importantly they have some common features (one reason is that both the Euclidean space and the finite vector space them are in vector spaces). 

Motivated by the above works, we study the  projections in vector spaces over finite fields.  We show some  notations first. Let $\mathbb{F}_p$ denote the finite field with $p$ elements where $p$ is prime, and $\F$ be the $n$-dimensional vector space over this field. The number of the $m$-dimensional linear subspaces of $\F$ is ${n \choose m}_{p}$ which is called Gaussian coefficient, see \cite{Cameron}, \cite{Ko} for more details. Note that to obtain a $m$-dimensional subspace, it is sufficient to choose  $m$ linear independent vectors. For the first vector we have $p^{n}-1$ choices from $\F$ (except the zero vector);  for the second vector we have $p^{n}-p$ choices to make that the second vector is independent to the first one; and so on. In the end we have $(p^{n}-1)(p^{n}-p)\cdots (p^{n}-p^{m-1})$ amount of  choices. Note that for each $m$-dimensional subspace, there are amount of $(p^{m}-1)(p^{n}-p)\cdots (p^{m}-p^{m-1})$ choices which generate (or span) the same subspace. It follows that (see  \cite[Theorem 6.3]{Cameron}, \cite{Ko} for more details)
\begin{equation}\label{eq:bino}
{n \choose m}_{p}=\frac{(p^{n}-1)(p^{n}-p)\cdots (p^{n}-p^{m-1})}{(p^{m}-1)(p^{m}-p)\cdots (p^{m}-p^{m-1})}.
\end{equation}
One interesting fact of Gaussian binomial coefficient is that  
\begin{equation*}
{n \choose m}_{p} =(1+o(p))  p^{m(n-m)}.
\end{equation*}
The notation  $o(p)$ means that $o(p)$ goes to zero as $p$ goes to infinity. However, the following estimate is enough for our purpose. Throughout the paper, we assume that  the prime number $p$ is large enough  such that 
\begin{equation}\label{eq:condition}
p^{m(n-m)} \leq {n \choose m}_{p} \leq 2  p^{m(n-m)}.
\end{equation}
Note that the exponent $m(n-m)$ is the dimension of the real Grassmanian manifold. For convenience, we  use the  same notation $G(n,m)$ to denote all  the $m$-dimensional linear subspaces of $\F$.

Before we show the definition of  projections in vector spaces over finite fields, let's recall the orthogonal projections in Euclidean spaces. Let $E\subset \RR^{n}$ and $V$ be a subspace of $\RR^{n}$. Then the orthogonal projection of $E$ onto $V$ is defined as 
\[
\pi_{V}(E)=\{x\in V: (x+V^{\perp})\cap E \neq \emptyset\}
\]  
where $V^{\perp}$ is the orthogonal complement of $V$. Note that the vector space $\F$ is not an inner product space (in general). For instance the Lagrange's (four square) theorem, every nature number is the sum of four squares, implies that $\mathbb{F}_{p}^{n}$ is never an inner product space for $n\geq 4$. Therefore, the word  orthogonal make no sense  in these spaces. Thus we need a new way to define `projection' in $\F$. The following is one of these choices. Let $E$ be a subset of  $\F$ and $W$ be a non-trivial subspace of $\F$. Let  $\pi^{W}(E)$ denote the collection of cosets of $W$ which intersect $E$, i.e.,
\[
\pi^{W}(E)=\{x+W: E\cap (x+W) \neq \emptyset, x\in \F\}.
\]
In this paper we are interested in the cardinality of $\pi^{W}(E)$. Let $|J|$ denote the cardinality of a set $J$. Observe that  if $E\subset \RR^{n}$ is a finite set (i.e., $|E|<\infty$) then
\[
|\pi_{W^{\perp}}(E)|=|\pi^{W}(E)|.
\]
Observe that for any set $E \subset \F$ and $W\in G(n,n-m)$ (Lagrange's group theorem),
\[
|\pi^{W}(E)|\leq \min\{|E|, p^{m}\}.
\]
In analogy of Theorem \ref{thm:M}, we have  the following finite fields version. 

%
%The orthogonal projection of $E$ onto subspace $V$ is defined as 
%\[
%\pi_{V}(E)=\{x\in V: E\cap (x+V^{\perp}) \neq \emptyset\}
%\]
%where $V^{\perp}$ is the orthogonal complement of subspace $V$. 

%Note that the definition here  is consistent with the orthogonal projections in Euclidean spaces. 

%By the associative law of matrix %multiplication, we have 
%\[
%A(A^{T}x)=(AA^{T})x, \, \, x \in \F.
%\]
%Thus we may write them freely as $AA^{T}x$. Note that the vector $x$ is a $n\times 1$ matrix. For the simper of notations, we may consider $V\in G(n,m)$ as a matrix  
%at the same time, and it's columns form a basis of $V$. Thus we may write \begin{equation}
%\pi_{V}(E)=VV^{T}(E). 
%\end{equation} 

\begin{theorem}\label{thm:maintheorem}
Let $E \subset \F$.  Then 

(a) for any $N<\frac{1}{2}|E|$, 
\[
|\{W\in G(n,n-m): |\pi^{W} (E)|\leq N\}| \leq  4 p^{(n-m)m-m}N;
\]

(b) for any $\delta \in (0,1)$, 
\[
|\{W\in G(n,n-m): |\pi^{W} (E)|\leq \delta p^{m}\}| \leq  2\left(\frac{\delta}{1-\delta}\right) p^{m(n-m)+m}|E|^{-1}.
\]
\end{theorem}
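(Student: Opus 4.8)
The plan is to reduce both statements to the behaviour of a single quantity, the \emph{coset energy}
\[
r_{W}(E):=\sum_{x+W}|E\cap(x+W)|^{2}=|\{(x,y)\in E\times E:\ x-y\in W\}|,
\]
where the sum runs over all $p^{m}$ cosets of $W\in G(n,n-m)$. By Cauchy--Schwarz applied to the nonempty cosets, $|E|^{2}\le |\pi^{W}(E)|\cdot r_{W}(E)$, so a small projection forces the energy to be large; hence it suffices to bound $r_{W}(E)$ on average over $W$ and then run a Markov--type argument. The average comes from a double count: interchanging the order of summation,
\[
\sum_{W\in G(n,n-m)}r_{W}(E)=|E|\binom{n}{m}_{p}+|E|(|E|-1)\binom{n-1}{m}_{p},
\]
because the zero vector lies in every $W$, while a fixed nonzero $v\in\F$ lies in exactly $\binom{n-1}{n-m-1}_{p}=\binom{n-1}{m}_{p}$ of the $(n-m)$-dimensional subspaces (pass to $\F/\langle v\rangle\cong\FF_{p}^{n-1}$). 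I will use throughout the estimates $\binom{n-1}{m}_{p}\le\binom{n}{m}_{p}/p^{m}\le 2p^{m(n-m)-m}$ and $\binom{n-1}{m-1}_{p}\le\binom{n}{m}_{p}/p^{n-m}\le 2p^{(m-1)(n-m)}$, which follow from the two $q$-Pascal identities $\binom{n}{m}_{p}=p^{m}\binom{n-1}{m}_{p}+\binom{n-1}{m-1}_{p}=\binom{n-1}{m}_{p}+p^{n-m}\binom{n-1}{m-1}_{p}$ together with \eqref{eq:condition}.

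For part (a) I will remove the diagonal and work with $r_{W}'(E):=r_{W}(E)-|E|$, for which $\sum_{W}r_{W}'(E)=|E|(|E|-1)\binom{n-1}{m}_{p}\le 2|E|^{2}p^{m(n-m)-m}$. If $|\pi^{W}(E)|\le N$, then Cauchy--Schwarz gives $r_{W}'(E)\ge |E|^{2}/N-|E|=|E|(|E|-N)/N$, and since $N<\tfrac12|E|$ this is at least $|E|^{2}/(2N)$. Summing this lower bound over all $W$ in the exceptional set and comparing with the total yields the claimed bound $4p^{m(n-m)-m}N$.

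For part (b) the raw second moment is too lossy (it carries an extra factor $p^{m(n-m)}$), so I will \emph{center} the energy: set $D_{W}:=r_{W}(E)-|E|^{2}/p^{m}=\sum_{x+W}\big(|E\cap(x+W)|-|E|/p^{m}\big)^{2}\ge 0$. Subtracting the $\binom{n}{m}_{p}$ copies of $|E|^{2}/p^{m}$ from the double count and simplifying via the two $q$-Pascal identities, I expect all the bulky terms to cancel and the sum to collapse to
\[
\sum_{W\in G(n,n-m)}D_{W}=\binom{n-1}{m-1}_{p}\cdot\frac{|E|\,(p^{n}-|E|)}{p^{m}}\le 2p^{(m-1)(n-m)}\cdot\frac{|E|\,p^{n}}{p^{m}}=2p^{m(n-m)}|E|.
\]
If $|\pi^{W}(E)|\le\delta p^{m}$, then $r_{W}(E)\ge |E|^{2}/(\delta p^{m})$, hence $D_{W}\ge\frac{1-\delta}{\delta}\cdot\frac{|E|^{2}}{p^{m}}$; multiplying by the size of the exceptional set and comparing with the identity above gives exactly $2\big(\tfrac{\delta}{1-\delta}\big)p^{m(n-m)+m}|E|^{-1}$. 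Note that the argument is uniform in $|E|$ — no case split is needed, since $D_{W}\ge 0$ always and the lower bound on $D_{W}$ does not use $|E|>\delta p^{m}$.

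The main obstacle is the exact evaluation of $\sum_{W}D_{W}$: one has to recognize the telescoping afforded by the two $q$-Pascal recurrences for $\binom{n}{m}_{p}$ and verify that every bulky term cancels, leaving the clean product $\binom{n-1}{m-1}_{p}|E|(p^{n}-|E|)/p^{m}$. Everything else — the Cauchy--Schwarz step, the incidence count of $(n-m)$-planes through a fixed nonzero vector, the Markov argument, and the estimates on Gaussian coefficients from \eqref{eq:condition} — should be routine bookkeeping.
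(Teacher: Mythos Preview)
Your proof is correct. Part (a) is essentially the paper's argument: both use Cauchy--Schwarz on the nonempty cosets, the incidence count $\binom{n-1}{n-m-1}_{p}=\binom{n-1}{m}_{p}$ for $(n-m)$-subspaces through a fixed nonzero vector, and the condition $N<|E|/2$ to absorb the diagonal term; the only cosmetic difference is that you bound $\sum_{W\in G(n,n-m)}r'_{W}(E)$ globally and use positivity, whereas the paper bounds $\e(E,\Theta')$ directly over $\Theta$.

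Part (b) is where you genuinely diverge. The paper proves the second estimate of Lemma~\ref{lem:keylemma} via the discrete Fourier transform: Lemma~\ref{lem:fff} gives $r_{W}(E)=p^{-m}\sum_{\xi\in Per(W)}|\widehat{E}(\xi)|^{2}$, the $\xi=0$ term supplies exactly $|E|^{2}/p^{m}$, and Plancherel plus the incidence count $\binom{n-1}{m-1}_{p}$ for nonzero $\xi\in Per(W)$ bounds the remainder. Your route is purely combinatorial: you center to $D_{W}=r_{W}(E)-|E|^{2}/p^{m}\ge 0$ and evaluate $\sum_{W}D_{W}$ exactly by the two $q$-Pascal identities, obtaining the closed form $\binom{n-1}{m-1}_{p}\,|E|(p^{n}-|E|)/p^{m}$ (your telescoping computation is correct). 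Both routes land on the same inequality $\sum_{W\in\Theta}D_{W}\le 2|E|\,p^{m(n-m)}$ and finish identically, but yours avoids characters altogether --- notable, since the paper remarks that its proof of (b) ``depends on the (discrete) Fourier transformation'', mirroring the Euclidean situation. The Fourier proof, on the other hand, makes transparent \emph{why} the centering $|E|^{2}/p^{m}$ is the right one (it is the $\xi=0$ contribution) and feeds directly into Proposition~\ref{pro:Fourier}; indeed, the same elementary double count underlies the first proof of Proposition~\ref{pro:i}.
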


We note that Theorem \ref{thm:maintheorem} $(a)$ for $m=n-1$ follows  from Orponen's pair argument \cite[Estimate (2.1)]{Orponen0}), and  Theorem \ref{thm:maintheorem} $(b)$ for $m=1,n=2$  follows from  Murphy and Petridis \cite[Corollary 1]{MurphyPetridis}. We immediately  have the following corollary via the special choices of $N$ and $\delta$ in Theorem \ref{thm:maintheorem}. 

\begin{corollary}\label{cor:mainclaim}
Let $E \subset \F$ with $|E|=p^{s}$. 

(a) If $s\leq m$ and $t \in (0, s]$, then
\[
| \{W \in  G(n,n-m) : |\pi^{W} (E)| \leq p^{t}/10 \} \leq \frac{1}{2} p^{m(n - m) -(m - t)}.
\]

(b) If $s> m$, then
\[
| \{W \in  G(n,n-m) : |\pi^{W} (E)| \leq   p^{m}/ 10 \}| \leq \frac{1}{2} p^{m(n - m) -(s-m)}.
\]

(c) If $s>2m$, then 
\[
| \{W \in  G(n,n-m) :|\pi^{W}  (E)|\neq p^{m} \}| \leq 4 p^{m(n - m) -(s-2m)}.
\] 
\end{corollary}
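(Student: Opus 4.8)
The plan is to read off all three parts of Corollary~\ref{cor:mainclaim} from Theorem~\ref{thm:maintheorem} by inserting suitable numerical values of $N$ and $\delta$, after checking that the theorem's hypotheses hold; this is a routine deduction rather than a new argument. For part (a) I would apply Theorem~\ref{thm:maintheorem}(a) with $N=p^{t}/10$. Since $t\le s$ we have $p^{t}/10\le p^{s}/10<\tfrac12 p^{s}=\tfrac12|E|$, so the hypothesis $N<\tfrac12|E|$ is satisfied, and the conclusion becomes
\[
|\{W\in G(n,n-m):|\pi^{W}(E)|\le p^{t}/10\}|\le 4p^{(n-m)m-m}\cdot\tfrac{1}{10}p^{t}=\tfrac{2}{5}\,p^{m(n-m)-(m-t)}\le\tfrac12\,p^{m(n-m)-(m-t)}.
\]

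For part (b) I would apply Theorem~\ref{thm:maintheorem}(b) with $\delta=1/10$, which lies in $(0,1)$. Then $\delta p^{m}=p^{m}/10$ and $\delta/(1-\delta)=1/9$, so, using $|E|=p^{s}$,
\[
|\{W\in G(n,n-m):|\pi^{W}(E)|\le p^{m}/10\}|\le 2\cdot\tfrac{1}{9}\,p^{m(n-m)+m}p^{-s}=\tfrac{2}{9}\,p^{m(n-m)-(s-m)}\le\tfrac12\,p^{m(n-m)-(s-m)}.
\]

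For part (c) there is one small preliminary step: because $s>2m\ge m$ we have $|E|=p^{s}>p^{m}$, so the trivial bound $|\pi^{W}(E)|\le\min\{|E|,p^{m}\}$ gives $|\pi^{W}(E)|\le p^{m}$ for every $W$, and since $|\pi^{W}(E)|$ is an integer the condition $|\pi^{W}(E)|\ne p^{m}$ is equivalent to $|\pi^{W}(E)|\le p^{m}-1$. I would then apply Theorem~\ref{thm:maintheorem}(b) with $\delta=1-p^{-m}\in(0,1)$, for which $\delta p^{m}=p^{m}-1$ and $\delta/(1-\delta)=p^{m}-1<p^{m}$, to get
\[
|\{W\in G(n,n-m):|\pi^{W}(E)|\ne p^{m}\}|\le 2(p^{m}-1)\,p^{m(n-m)+m}p^{-s}<2\,p^{m(n-m)-(s-2m)}\le 4\,p^{m(n-m)-(s-2m)}.
\]
The only points needing attention are the admissibility checks just mentioned (that $N<\tfrac12|E|$ in (a), and that $\delta\in(0,1)$ in (b) and (c)) together with the reduction of the event in (c) via integrality and the trivial upper bound; I do not expect any real obstacle beyond keeping track of the multiplicative constants.
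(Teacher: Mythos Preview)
Your proposal is correct and follows exactly the approach the paper indicates: it derives (a) and (b) by plugging $N=p^{t}/10$ and $\delta=1/10$ into Theorem~\ref{thm:maintheorem}, and obtains (c) from Theorem~\ref{thm:maintheorem}(b) with $\delta=(p^{m}-1)/p^{m}=1-p^{-m}$ together with the integrality observation that $|\pi^{W}(E)|\neq p^{m}$ is equivalent to $|\pi^{W}(E)|\le p^{m}-1$. Your admissibility checks and constant bookkeeping are in order (indeed in (c) you even get the slightly better constant $2$).
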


The Corollary \ref{cor:mainclaim} $(c)$ follows by the choice of $\delta=\frac{p^{m}-1}{p^{m}}$, and an easy fact that  if $|\pi^{W}(E)|>p^{m}-1$  then $|\pi^{W}(E)|=p^{m}$ (i.e. $E$ intersects each coset of $W$).  Note that the exponents in Corollary \ref{cor:mainclaim} are the same as in Theorem \ref{thm:M}, and  $|\pi^{W}(E)|=p^{m}$ correspondence to the existence of interior in Theorem \ref{thm:M} $(c)$. 

%Furthermore, the Theorem \ref{thm:maintheorem} (b) also implies that if $|E|\geq 5 p^{2m}$ then 
% $\pi_{V}(E)=V$  for some $m$-dimensional subspace $V$. This is a slight improvement over the Euclidean case. Note that there exists a Borel set $ A \subset \RR^{2}$ with dimension $2$, but the projection of $A$ has empty interior in every direction (see \cite[Chapter 11]{Mattila} for more details).  

I do not know whether these bounds are sharp. We formulate  the following finite field version of one conjecture for the dimension bound of the exceptional set in the Euclidean plane. For more backgrounds, and partial improvements on  the size of the exceptional set in Euclidean plane, see \cite[Chapter 5]{Mattila},  \cite[Theorem 1.2]{Oberlin}, \cite{Orponen0} \cite[Proposition 1.11]{Orponen1}, \cite{Orponen2}, \cite{Orponen3}.   
\begin{conjecture}
For any $s/2\leq t\leq s\leq 1$ and $E\subset \mathbb{F}_{p}^{2}$ with  $p^{s}/C\leq |E|\leq C p^{s}$. Then 
\[
| \{W \in  G(2,1) : |\pi^{W} (E)|\leq p^{t}  \} |\leq C(s,t, C) p^{2t-s}.
\]
\end{conjecture}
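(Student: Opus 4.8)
The plan is to first reformulate the problem as a question about linear images of $E$, and then to attack the sharp count through incidence and sum--product machinery. Parametrise $G(2,1)$ by the slope $a\in\mathbb{F}_p$ (together with the vertical direction), writing $W_a=\{(u,au):u\in\mathbb{F}_p\}$. Two points of $E$ lie in a common coset of $W_a$ exactly when their difference is proportional to $(1,a)$, i.e.\ when $x_2-ax_1=y_2-ay_1$. Hence the coset of a point $(x_1,x_2)$ is labelled by the linear functional $\psi_a(x_1,x_2)=x_2-ax_1$, and
\[
|\pi^{W_a}(E)|=|\psi_a(E)|=|\{x_2-ax_1:(x_1,x_2)\in E\}|.
\]
Thus the conjecture asks, for $|E|\approx p^s$, how many slopes $a$ can compress $E$ to at most $p^t$ values of $x_2-ax_1$. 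This is precisely a finite--field (Bourgain) projection/sum--product statement.

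As a baseline, the pair--counting (second moment) argument underlying Corollary \ref{cor:mainclaim}$(a)$ already gives the weaker bound $p^{t}$. Writing $n_{a,c}=|E\cap c|$ over the cosets $c$ of $W_a$ and $E_a=\sum_c n_{a,c}^2$, Cauchy--Schwarz yields $|\pi^{W_a}(E)|\ge |E|^2/E_a$, so a bad slope satisfies $E_a\ge p^{2s-t}$; since $\sum_a(E_a-|E|)=|E|^2-|E|$ counts each ordered pair of distinct points exactly once (by the unique direction of their difference), summing over the bad set $\mathcal{B}$ gives $|\mathcal{B}|\,p^{2s-t}\lesssim p^{2s}$, i.e.\ $|\mathcal{B}|\lesssim p^{t}$. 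The entire difficulty is to extract the extra factor $p^{t-s}$ predicted by the sum--product heuristic.

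I would first test the cubic energy / collinear--triple refinement. By Cauchy--Schwarz $\sum_c n_{a,c}^3\ge E_a^2/|E|\ge p^{3s-2t}$ for each bad slope (in the relevant range $t<s$ the typical fibre has size $p^{s-t}\gg1$, so triples of distinct collinear points dominate the lower--order diagonal terms). Since an ordered collinear triple with at least two distinct points determines a unique direction, summing over $a\in\mathcal{B}$ gives
\[
\#\{\text{ordered collinear triples in }E\}\gtrsim |\mathcal{B}|\,p^{3s-2t}.
\]
Were the number of collinear triples $\lesssim|E|^2=p^{2s}$, this would yield exactly $|\mathcal{B}|\lesssim p^{2t-s}$. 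Dualising instead and passing to the $p^{s-t}$--rich cosets, the Szemer\'edi--Trotter bound on rich lines in $\mathbb{F}_p^2$ gives only $|\mathcal{B}|\lesssim p^{3t-s}$, which for $t>s/2$ is even weaker than the baseline $p^{t}$, because it discards the crucial fact that all these rich lines occur in merely $|\mathcal{B}|$ directions.

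The main obstacle is precisely the regime where $E$ concentrates on lines or carries additive/multiplicative structure: a set lying on one line produces a single bad slope yet contributes $\approx|E|^3$ collinear triples, so the triple count is \emph{not} bounded by $|E|^2$ in general and the budget above is squandered by a few highly structured directions. Overcoming this requires genuinely exploiting the restricted set of directions (a Kakeya/Furstenberg-type input) and quarantining the structured part. Concretely, I would dyadically decompose the slopes according to the size of $E_a$, then apply the Balog--Szemer\'edi--Gowers theorem together with the Pl\"unnecke--Ruzsa inequalities to extract from an assumed surplus of bad slopes a subset of $E$ with small additive doubling that is also approximately invariant under a dilation $x\mapsto ax$, hence simultaneously additively and multiplicatively structured; the finite--field sum--product theorem of Bourgain--Katz--Tao then forbids this (there are no approximate subfields in $\mathbb{F}_p$ for prime $p$), forcing a contradiction with $|\mathcal{B}|>p^{2t-s}$. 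Aligning all scales so as to recover the \emph{exact} exponent $2t-s$, rather than $2t-s+\varepsilon$ or the non-sharp $3t-s$, is the crux, and is exactly why the statement is posed here only as a conjecture.
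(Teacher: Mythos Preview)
The paper states this as a \emph{conjecture} and offers no proof, so there is nothing to compare your write-up against. What you have submitted is not a proof either, and you are candid about this in the last paragraph: it is a research outline that (i) recovers the baseline bound $|\mathcal{B}|\lesssim p^{t}$ by the same pair-counting argument underlying Theorem~\ref{thm:maintheorem}(a), (ii) shows why the obvious cubic-energy upgrade stalls, and (iii) points to Balog--Szemer\'edi--Gowers plus Bourgain--Katz--Tao sum--product as the natural next step. All of this is a fair survey of the landscape around the conjecture.

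The genuine gap is the one you name yourself. Even carried out in full, the BSG + sum--product route loses a factor $p^{\varepsilon}$ (with ineffective $\varepsilon$) at the structural dichotomy, so it would give at best $|\mathcal{B}|\lesssim_{\varepsilon} p^{2t-s+\varepsilon}$ rather than the sharp $C(s,t,C)\,p^{2t-s}$ the conjecture demands; the corresponding Euclidean statement (the sharp Kaufman-type bound in the plane) is likewise open. Nothing in your outline bridges that gap, and no currently known argument does. So your document is useful as context for why the statement is labelled a conjecture, but it should not be presented as a proof proposal.

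One minor point: your identity $\sum_a (E_a-|E|)=|E|^2-|E|$ is correct over the full set of $p+1$ directions, and is morally the same as the energy estimate in Lemma~\ref{lem:keylemma}; either route gives the $p^t$ baseline.
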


 In Euclidean space, there are various random fractal sets which do not have exceptional set in the projection theorem, see \cite{SS} for more details and reference therein. For the finite field case, we study the projections of  random sets in $\F$ (percolation on $\F$). We have the following results.

\begin{theorem}\label{thm:small}
For any $0<s\leq m$, there is a positive number $p_{0}=p_{0}(n,m,s)$ such that for any prime number $p\geq p_0$, there exists a subset $E\subset \F$ with $p^{s}/2\leq |E|\leq 2p^{s}$ such that,
\[
|\pi^{W}(E)|\geq |E|/24 \text{ for all $W\in G(n,n-m)$ }.
\]
\end{theorem}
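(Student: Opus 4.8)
The plan is to construct $E$ by random sampling. Fix $0 < s \le m$ and let each point $x \in \F$ be included in $E$ independently with probability $q = p^{s-n}$. Then $\EE|E| = p^n q = p^s$, and a routine Chernoff bound shows $p^s/2 \le |E| \le 2p^s$ with probability tending to $1$ as $p \to \infty$. The substantive part is to show that simultaneously, with positive probability (in fact probability tending to $1$), every projection $\pi^W(E)$ with $W \in G(n,n-m)$ is large. Since we expect $|E| \approx p^s$ with $s \le m$ and each coset of $W$ has size $p^{n-m}$, a typical coset meets $E$ in roughly $p^s \cdot q^{-1} \cdot$(nothing)\,---\,rather, the expected number of points of $E$ in a fixed coset is $p^{n-m} q = p^{s-m} \le 1$, so most cosets that meet $E$ meet it in exactly one point, and heuristically $|\pi^W(E)|$ should be close to $|E|$. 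The goal is to make this rigorous uniformly over all $W$.

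The key steps I would carry out are as follows. First, for a fixed $W \in G(n,n-m)$, estimate from below $|\pi^W(E)|$: writing $E$ as a union over the $p^m$ cosets of $W$, we have $|\pi^W(E)| = \#\{\text{cosets } H : E \cap H \ne \emptyset\}$. The quantity $|E| - |\pi^W(E)|$ counts the ``excess'' $\sum_H (|E \cap H| - 1)^+$, which is at most $\sum_H \binom{|E \cap H|}{2}$ counted with multiplicity\,---\,more precisely $|E| - |\pi^W(E)| \le \#\{(x,y) \in E \times E : x \ne y,\ x - y \in W\}$. So it suffices to bound the number of ``collinear-within-$W$'' ordered pairs in $E$. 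Taking expectations, $\EE \#\{(x,y) : x \ne y,\ x-y \in W\} = q^2 \cdot p^n \cdot (p^{n-m}-1) \le p^{2s-m} \le p^s$. Summing over all $W \in G(n,n-m)$ (there are at most $2p^{m(n-m)}$ of them by \eqref{eq:condition}) gives a total expected count of collinear pairs, over all $W$, of at most $2p^{m(n-m)+s}$, which is far too large to control all $W$ from a single first-moment bound. Hence the second step: instead, show that for \emph{each fixed} $W$, $\#\{(x,y) \in E\times E : x \ne y,\ x - y \in W\} \le |E|/48$, say, holds with probability at least $1 - 2^{-p^{m(n-m)}-1}$ (a very strong tail bound), and then union bound over the at most $2p^{m(n-m)}$ subspaces $W$. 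For this we need a concentration inequality for the pair-count statistic; since it is a polynomial of degree $2$ in the independent indicator variables, I would use a version of the bounded-differences or Kim--Vu polynomial concentration inequality, or a direct second-moment / martingale argument, to get exponential (in a power of $p$) tails. Combining: off an event of probability at most $2p^{m(n-m)} \cdot 2^{-p^{m(n-m)}-1} \to 0$, every $W$ satisfies $|\pi^W(E)| \ge |E| - |E|/48 \ge |E|/2 \ge |E|/24$; intersecting with the event $p^s/2 \le |E| \le 2p^s$ (also of probability $\to 1$) shows the desired $E$ exists for all large $p$.

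I expect the main obstacle to be obtaining a tail bound strong enough to survive the union bound over all $\sim p^{m(n-m)}$ subspaces $W$: a naive Chebyshev bound on the degree-two pair-count only gives polynomial-in-$p$ decay, which is hopelessly weak against $p^{m(n-m)}$ many events. The resolution is that the pair-count random variable $X_W = \#\{(x,y)\in E^2 : x\ne y,\ x-y\in W\}$ has mean $\lesssim p^{s}$ while its ``complexity'' is small, and exponential-type concentration (e.g. via Kim--Vu, or by noting $X_W$ is stochastically dominated by a sum that concentrates like a Poisson/binomial) should yield $\PP(X_W \ge \lambda \EE X_W)$ decaying faster than any polynomial\,---\,indeed faster than $2^{-p^{\varepsilon}}$\,---\,for fixed $\lambda > 1$, which beats the doubly-exponentially-smaller reciprocal of the number of subspaces. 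An alternative, possibly cleaner route avoiding heavy concentration machinery: directly bound $\EE\big[\#\{W : X_W > |E|/48\}\big]$ via Markov applied \emph{after} summing, i.e. $\EE \sum_W \mathbf{1}[X_W > |E|/48] \le \frac{48}{|E|}\sum_W \EE X_W \mathbf{1}[|E| \le 2p^s] + (\text{small})$, but this again gives $\sim p^{m(n-m)}$ as the bound, which is useless. So the exponential tail bound genuinely seems to be the crux, and getting the constants and the exponent in the tail right is where the real work lies; the rest of the argument is bookkeeping.
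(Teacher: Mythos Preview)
Your random-sampling framework is correct and matches the paper's setup, but the reduction to the pair count $X_W$ introduces a loss that is fatal at the boundary $s=m$. When $s=m$, each coset of $W$ has $p^{n-m}$ points and inclusion probability $q=p^{m-n}$, so $|E\cap H|$ is approximately Poisson$(1)$; hence
\[
\EE X_W=\sum_H \EE\bigl[|E\cap H|\,(|E\cap H|-1)\bigr]\approx p^m\approx \EE|E|.
\]
Thus $X_W$ is typically comparable to $|E|$, not $\le |E|/48$, and no concentration inequality can rescue the step ``$X_W\le |E|/48$ with probability $1-2^{-p^{m(n-m)}-1}$'': the event you want simply has probability tending to $0$. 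Indeed, for $s=m$ one genuinely has $|\pi^W(E)|\approx (1-e^{-1})|E|\approx 0.63\,|E|$, so the target $|\pi^W(E)|\ge 47|E|/48$ implicit in your argument is false for the random set. (For $s$ strictly below $m$ your approach is salvageable in principle, since then $\EE X_W\approx p^{2s-m}=o(p^s)$; but this still leaves the boundary case uncovered and requires nontrivial degree-two concentration machinery.)

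The paper avoids all of this with a much simpler observation: since the cosets of $W$ are disjoint and points are included independently, the indicators $\mathbf{1}[E\cap(x_{W,j}+W)\neq\emptyset]$ are \emph{independent} Bernoulli$(\delta')$ variables with $\delta'=1-(1-\delta)^{p^{n-m}}$. Hence $|\pi^W(E)|$ is itself Binomial$(p^m,\delta')$, with mean $\mu=p^m\delta'\ge p^s/6$, and a direct Chernoff bound gives $\PP(|\pi^W(E)|<\mu/2)\le e^{-\mu/16}\le e^{-p^s/96}$. This tail beats the union bound over $\binom{n}{m}_p\le 2p^{m(n-m)}$ subspaces for all $0<s\le m$, and combining with the Chebyshev bound on $|E|$ yields $|\pi^W(E)|\ge p^s/12\ge |E|/24$. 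So the ``crux'' you identified---an exponential tail bound strong enough to survive the union bound---is obtained for free once you notice $|\pi^W(E)|$ is a sum of i.i.d.\ Bernoulli variables, with no need to pass through the pair count or invoke Kim--Vu.
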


\begin{theorem}\label{thm:large}
For any $m<s\leq n$, there is a positive number $p_{0}=p_{0}(n,m,s)$ such that for any prime number $p\geq p_0$, there exists a subset $E \subset \F$ with $p^{s}/2\leq |E|\leq 2p^{s}$ such that, 
\[
|\pi^{W}(E)| = p^{m} \text{ for all $W\in G(n,n-m)$ }.
\]
\end{theorem}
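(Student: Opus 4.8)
The plan is to use the probabilistic method: construct $E$ as a random subset of $\F$ by keeping each point independently with a suitably chosen probability $q$, and then show that with positive probability (in fact with probability tending to $1$) both the size condition $p^{s}/2 \leq |E| \leq 2p^{s}$ and the covering condition $|\pi^{W}(E)| = p^{m}$ for \emph{every} $W \in G(n,n-m)$ hold simultaneously. Since $m < s$, the natural choice is $q = p^{s-n}$, so that $\EE|E| = p^{s}$, and a Chernoff/Bernstein concentration bound gives $\PP(p^{s}/2 \leq |E| \leq 2p^{s}) \to 1$ as $p \to \infty$. This handles the cardinality requirement, so the real content is the covering statement.

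For the covering statement, fix $W \in G(n,n-m)$ and a coset $x + W$. Each coset of $W$ has exactly $p^{n-m}$ points, so the probability that $E$ misses this particular coset is $(1-q)^{p^{n-m}} = (1 - p^{s-n})^{p^{n-m}} \leq \exp(-p^{s-m})$. There are $p^{m}$ cosets of $W$, and by \eqref{eq:condition} there are at most $2p^{m(n-m)}$ subspaces $W$, so a union bound over all cosets of all $W \in G(n,n-m)$ bounds the probability that \emph{some} coset of \emph{some} $W$ is missed by
\[
2p^{m(n-m)} \cdot p^{m} \cdot \exp(-p^{s-m}),
\]
which tends to $0$ as $p \to \infty$ precisely because $s - m > 0$ makes the exponential decay beat any fixed power of $p$. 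Intersecting this event with the size event (both of which have probability $\to 1$) shows that for all large $p$ a valid $E$ exists, which is exactly the assertion with $p_{0} = p_{0}(n,m,s)$.

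The main obstacle — really the only place one must be slightly careful — is making the union bound genuinely uniform over all of $G(n,n-m)$: one needs the count $|G(n,n-m)| \leq 2p^{m(n-m)}$ from \eqref{eq:condition} and the observation that the events "coset $C$ is missed" for the various cosets $C$ are handled by a plain union bound (no independence across cosets or across subspaces is needed). One should also double-check the elementary inequality $(1 - p^{s-n})^{p^{n-m}} \leq \exp(-p^{s-m})$, which follows from $1-u \leq e^{-u}$ with $u = p^{s-n}$ and the exponent $p^{n-m}$, valid since $s < n$ guarantees $p^{s-n} < 1$. Everything else is the standard first-moment/concentration packaging, and no Fourier-analytic input is required here, in contrast to Theorem \ref{thm:maintheorem}.
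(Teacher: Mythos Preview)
Your proposal is correct and follows essentially the same argument as the paper: take the random subset with retention probability $\delta=p^{s-n}$, use $(1-\delta)^{p^{n-m}}\le e^{-p^{s-m}}$ together with a union bound over the at most $2p^{m(n-m)+m}$ affine $(n-m)$-planes, and handle the size condition by a concentration inequality (the paper uses Chebyshev where you invoke Chernoff, which is an inessential difference). One tiny remark: the theorem allows $s=n$, in which case $q=1$ and $E=\F$ deterministically, but the inequality $1-u\le e^{-u}$ still holds at $u=1$, so your argument goes through unchanged.
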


Fourier analysis plays an important role in many topics in fractal geometry. Furthermore there are some results which all known proofs depend on Fourier transforms, for example the statement $(c)$ of Theorem \ref{thm:M}. Note that 
the proof for  Theorem \ref{thm:maintheorem} $(b)$ also depends on the (discrete) Fourier transformation.

\begin{proposition}\label{pro:Fourier}
Let $E\subset \F$ with $|\E(\xi)|\leq C |E|^{\alpha}$ for all $\xi\neq 0$ where $\alpha \in [1/2, 1)$ and $C$ is a positive constant.

(a) If $|E|\leq C_1 p^{\frac{m}{2-2\alpha}}$, then 
\[
|\pi^{W}(E)| \geq C_{2}|E|^{2-2\alpha} \text{ for all $W\in G(n,n-m)$ }.
\]

(b) If $|E|> C_1 p^{\frac{m}{2-2\alpha}}$, then  
\[
|\pi^{W}(E)| \geq p^{m}/2 \text{ for all $W\in G(n,n-m)$}.
\]

(c) If $|E|> C_3 p^{\frac{m}{1-\alpha}}$, then 
\[
|\pi^{W}(E)|=p^{m} \text{ for all } W\in G(n,n-m).
\]

The constants $C_{1}, C_{2}, C_{3}$ only depend on the constant $C$ and $\alpha$.  
\end{proposition}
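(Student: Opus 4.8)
The plan is to reduce all three statements to a single Fourier-analytic estimate on how $E$ distributes among the cosets of $W$. Fix $W \in G(n,n-m)$. Since $\dim W = n-m$ there are exactly $p^{m}$ cosets of $W$; for a coset $v+W$ set $f(v)=|E\cap(v+W)|$, so that $\sum_{v} f(v)=|E|$ (the sum over a system of coset representatives) and $|\pi^{W}(E)|$ is exactly the number of $v$ with $f(v)\neq 0$. The starting point is the identity
\[
\sum_{v} f(v)^{2}=|\{(a,b)\in E\times E: a-b\in W\}|=:N_{W}
\]
together with its Fourier expression $N_{W}=p^{-m}\sum_{\xi\in W^{\perp}}|\E(\xi)|^{2}$, where $W^{\perp}=\{\xi: x\cdot\xi=0 \text{ for all } x\in W\}$ is the $m$-dimensional annihilator of $W$. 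This follows by expanding $\mathbf 1_{W}(a-b)$ through Fourier inversion and using that $\widehat{\mathbf 1_{W}}$ equals $p^{n-m}$ times the indicator of $W^{\perp}$ (the standard subspace/annihilator duality, and the place where the exponent $p^{m}$ rather than $p^{n-m}$ is produced). Peeling off the frequency $\xi=0$, which contributes $|\E(0)|^{2}=|E|^{2}$, and applying the hypothesis $|\E(\xi)|\leq C|E|^{\alpha}$ to the fewer than $p^{m}$ remaining frequencies of $W^{\perp}$ yields the key estimate
\[
N_{W}\leq \frac{|E|^{2}}{p^{m}}+C^{2}|E|^{2\alpha}.
\]

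For (a) and (b) I would feed this into Cauchy--Schwarz in the form $|\pi^{W}(E)|\geq |E|^{2}/N_{W}\geq \bigl(p^{-m}+C^{2}|E|^{2\alpha-2}\bigr)^{-1}$ and compare the two terms in the denominator. With the choice $C_{1}=C^{1/(1-\alpha)}$, the hypothesis $|E|\leq C_{1}p^{m/(2-2\alpha)}$ is precisely $p^{-m}\leq C^{2}|E|^{2\alpha-2}$, giving $|\pi^{W}(E)|\geq (2C^{2})^{-1}|E|^{2-2\alpha}$, that is, (a) with $C_{2}=(2C^{2})^{-1}$; in the complementary range one has $C^{2}|E|^{2\alpha-2}\leq p^{-m}$ and hence $|\pi^{W}(E)|\geq p^{m}/2$, that is, (b). For (c) I would instead use the second-moment (variance) version: from the same Fourier identity, $\sum_{v}\bigl(f(v)-|E|p^{-m}\bigr)^{2}=N_{W}-|E|^{2}p^{-m}=p^{-m}\sum_{\xi\in W^{\perp}\setminus\{0\}}|\E(\xi)|^{2}<C^{2}|E|^{2\alpha}$. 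If $|\pi^{W}(E)|<p^{m}$ then some coset is missed, so $f(v_{0})=0$ for some $v_{0}$, which by itself contributes $(|E|p^{-m})^{2}$ to the left-hand side; this forces $|E|^{2-2\alpha}<C^{2}p^{2m}$, and the contrapositive says that $|E|>C^{1/(1-\alpha)}p^{m/(1-\alpha)}$ implies $|\pi^{W}(E)|=p^{m}$, that is, (c) with $C_{3}=C^{1/(1-\alpha)}$. Since every bound above is uniform in $W\in G(n,n-m)$, this settles all three parts.

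I do not anticipate a serious obstacle: the whole proposition is one Fourier computation followed by Cauchy--Schwarz (for (a), (b)) or a variance estimate (for (c)). The points that need care are purely bookkeeping: fixing the normalization of the discrete Fourier transform so that $\E(0)=|E|$, correctly locating $\widehat{\mathbf 1_{W}}$ on the $m$-dimensional annihilator $W^{\perp}$, and checking that the thresholds in (a) and (b) are genuinely complementary for one and the same constant $C_{1}$ — which the choice $C_{1}=C^{1/(1-\alpha)}$ arranges — so that $C_{1},C_{2},C_{3}$ depend only on $C$ and $\alpha$.
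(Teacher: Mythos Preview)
Your proposal is correct and follows essentially the same route as the paper: both arguments combine the Cauchy--Schwarz inequality $|\pi^{W}(E)|\ge |E|^{2}/\sum_{v}f(v)^{2}$ with the Fourier identity $\sum_{v}f(v)^{2}=p^{-m}\sum_{\xi\in Per(W)}|\E(\xi)|^{2}$ (the paper's Lemma~\ref{lem:fff}, your $W^{\perp}=Per(W)$) and then compare the two terms $p^{-m}|E|^{2}$ and $C^{2}|E|^{2\alpha}$. The only cosmetic difference is in part~(c): the paper pushes the Cauchy--Schwarz lower bound past $p^{m}-1$, whereas you phrase the same estimate as a variance bound and read off a single empty coset; the resulting thresholds coincide, and your constant $C_{3}=C^{1/(1-\alpha)}$ is in fact slightly sharper than the paper's $(2C^{2})^{1/(2-2\alpha)}$.
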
 

Here and in what follows, $\xi\neq 0$ means that $\xi$ is a non-zero vector in $\F$. For $E\subset \F$, we  simply write 
$E(x)$  for the
characteristic function of $E$, and $\widehat{E}$ it's discrete Fourier transform.

Now we  discuss the condition  that  $\alpha \in [1/2,1)$. By the definition of Fourier transformation (see \eqref{eq:dede}), for any subset $E\subset \F$, 
\begin{equation*}
 |\E(\xi)|\leq |E| \text{ for all } \xi \neq 0.
\end{equation*} 
For the case $\alpha=1/2$, Iosevich and Rudnev \cite{IsoevichRudnev} called these sets Salem sets. To be formal, a subset $E\subset \F$ is called a  $(C, s)$ Salem set  if $p^{s}/C\leq |E|\leq Cp^{s}$ and for any $\xi\neq 0$,
\[
|\E(\xi)|\leq C \sqrt{|E|}.
\]
Iosevich and Rudnev \cite{IsoevichRudnev} introduced the finite fields Salem sets for their study of  Erd\H{o}s/Falconer distance problem in  vector spaces over finite fields. Note that this is a finite fields version of Salem sets in Euclidean spaces, see \cite[Chapter 3]{Mattila} for more details on Salem sets in Euclidean spaces. Roughly speaking the Fourier coefficients  of Salem sets   
have  the `best possible' upper bound. This follows by the Plancherel identity, 
\[
\sum_{\xi \in \F}|\E(\xi)|^{2}=p^{n}|E|.
\]
To be precise, let $E\subset \F$ with $|E|\leq p^{n}/2$ and $|\E(\xi)|\leq  |E|^{\alpha}$ for any $\xi\neq 0$. Then 
\[
p^{n}|E| -|E|^{2}  = \sum_{\xi \neq 0}|\E(\xi)|^{2}\leq  p^{n}|E|^{2\alpha}.
\]
It follows that 
\[
1/2\leq |E|^{2\alpha-1}.
\]
Thus $\alpha\geq 1/2$ provided $E$ is large enough. See \cite[Proposition 2.6]{Babai}, \cite{IsoevichRudnev} for more details.

Observe that Proposition \ref{pro:Fourier}  implies that finite field Salem sets do not have  exceptional directions in the Corollary \ref{cor:mainclaim}. Furthermore,  if there exists $(C, s)$ Salem set  where $C$ does not depend on $p$, then by Proposition \ref{pro:Fourier} we can obtain  Theorems \ref{thm:small}-\ref{thm:large}. 
However, it seems that  the only known examples of Salem sets in $\F$ are the discrete paraboloid and the discrete sphere, and both the size of the  discrete paraboloid and the discrete sphere in $\F$ are roughly $p^{n-1}$, see \cite{IsoevichRudnev} for more details. It is natural to ask that does there exists $(C, s)$ Salem set in $\F$ for any (large) prime number $p$. The above results and  \cite[Problem 20]{Mattila2004} suggest that there exists $(C, s)$ Salem set only if $s$ is an integer. If we loose the condition in the definition of Salem sets, the author \cite{Chen2017}, Hayes \cite{Hayes} showed the existence of  (weak) Salem sets in any given size via the random subsets of $\F$. A set $E$ is called  (weak) Salem set if there is a constant $C$ (does not depend on $p$) such that 
\[
|\E(\xi)|\leq C \sqrt{|E| \log p}  \text{  for all } \xi \neq 0.
\] 
See Chen \cite{ChenX} for a random construction of Salem set in Euclidean space (with a $\log$ factor also).

%The expression $X\lesssim Y$ means that there is a positive constant $C$ such that $X\leq CY$, $X \gtrsim Y$ means $Y \lesssim X$, and $X\approx Y$ if $X\lesssim Y$ and $X\gtrsim Y$.
 
The structure of the paper is as follows. In Section \ref{sec:preliminaries}, we set up  notation and  lemmas for later use. We prove Theorem \ref{thm:maintheorem}, Theorems \ref{thm:small}-\ref{thm:large}, and Proposition \ref{pro:Fourier} in Section \ref{sec:main}, Section\ref{sec:random}, and Section \ref{sec:Fourier} respectively. In the last section, we extend an identity of 
Murphy and Petridis \cite{MurphyPetridis}. We give another definition of projections in $\F$, and show that our results still holds under this definition.

\section{preliminaries} \label{sec:preliminaries}

For $1\leq m\leq n$, recall that $G(n,m)$ stands for all the $m$-dimensional linear subspaces of $\F$. Let $A(n,m)$ denote the family of all $m$-dimensional planes, i.e., the translation of some $m$-dimensional subspace. Let $W\in G(n,n-m)$, then observe by Lagrange's group theorem that there are $p^{m}$ cosets of $W$. Let $x_{W,j}+W, 1\leq j\leq p^{m}$
be the different cosets of $W$. Let $G \subset G(n,n-m)$, and define 
\[
G'= \{x_{W,j}+  W: 1\leq j\leq p^{m}, W \in G\}. 
\]

\noindent{\bf Outline of the method.} The method is an adaptation of the  counting pairs argument of Orponen \cite[Estimate (2.1)]{Orponen0} to our setting.  Let  $E\subset \FF_{p}^{n}$ and $W\in G(n,n-m)$, then $|E| =\sum_{j=1}^{p^{m}} |E\cap (x_{W,j}+W)|,$
and the Cauchy-Schwarz inequality  implies  
\begin{equation}\label{eq:pairs}
|E|^{2}\leq |\pi^{W}(E)|\sum_{j=1}^{p^{m}} |E\cap (x_{W,j}+W)|^{2}.
\end{equation}
Note that $|E\cap (x_{W,j}+W)|^{2}$ is the amount of pairs of $E$ inside $x_{W,j}+W$. Let $N\leq p^{m}$, define 
\[
\Theta=\{W\in G(n,n-m): |\pi^{W} (E)|\leq N\}.
\]
Summing two sides over 
$W\in \Theta$ in estimate $\eqref{eq:pairs}$, we obtain 
\begin{equation}\label{eq:argument}
|\Theta| |E|^{2} \leq N\sum_{W\in \Theta}\sum_{j=1}^{p^{m}}|E\cap (x_{W,j}+W)|^{2}:=N\e(E,\Theta').
\end{equation}
Therefore, the left problem is to estimate $\e(E, \Theta')$. 

\subsection{Counting pairs and energy arguments}
Motivated by the above pairs argument of Orponen \cite[Estimate (2.1)]{Orponen0}, an incidence identity of Murphy and Petridis \cite[Lemma 1]{MurphyPetridis}, and the additive energy in additive combinatorics \cite[Chapter 2]{TaoVu},  we give the following definition which plays a key role for the proof of Theorem \ref{thm:maintheorem}. 

\begin{definition}
Let $E\subset \F$ and $ \mathcal{A} \subset A(n,k)$. Define the (generalized) energy of $E$ on $\mathcal{A}$ as
\[
\e(E, \mathcal{A}) =\sum_{W\in \mathcal{A}} |E\cap W|^{2}.
\] 
\end{definition}

\begin{remark} Recalling the additive energy, if $A, B\subset \FF_{p}$ then the  additive energy $E(A, B)$ between $A$ and $B$ is defined as  the quantity
\begin{equation*}
E(A, B) = |\{(a, a',
 b, b') \in A \times A \times B \times B : a + b = a' + b'\}|.
\end{equation*}
Observe that by our notation 
\begin{equation*}
E(A, B)=\e(A\times B, \mathcal{L}),
\end{equation*}
where 
\[
\mathcal{L}=\{\ell_{k}\}_{k=0}^{p-1}, \text{ and } \ell_{k}=\{(x,y)\in \mathbb{F}_{p}^{2} :x+y=k\}.
\]
For more details on additive energy, see  
\cite{TaoVu}.
\end{remark}

%
%We note that Orponen \cite{OrponenD} applying the pairs argument to study the projections of $N$ points in $\RR^{2}$, and obtained the discrete Kaufman bound of exceptional sets. In our discrete setting (projections in $\FF_{p}^{n}$), we can use tools from linear algebra and  Fourier analysis, and we obtain some similar results to Euclidean case. 

\subsection{Discrete Fourier transformation}
In the following we collect the basic facts about Fourier transformation which related to our setting. For more details on discrete Fourier analysis, see Green \cite{Green}, Stein and Shakarchi \cite{Stein}. Let $f : \F\longrightarrow \mathbb{C}$ be a complex value function. Then the Fourier transform of $f$ at $\xi \in \F$ is defined as 
\begin{equation}\label{eq:dede}
\widehat{f}(\xi)=\sum_{x\in \F} f(x)e(-x\cdot \xi),
\end{equation}  
where $e(-x \cdot \xi)=\ee$ and the dot product 
\[
 x\cdot\xi =x_1\xi_1+\cdots +x_n\xi_n\,(\text{mod}\, p).
 \] 
Recall the following Plancherel identity, 
\begin{equation*}
 \sum_{\xi \in \F}|\widehat{f}(\xi)|^{2}=p^{n}\sum_{x\in \F} |f(x)|^{2}.
\end{equation*} 
Specially for the subset $E\subset \F$, we have 
\[
\sum_{\xi \in \F} \m\E\m^{2}=p^{n}\m E\m.
\]

In the following of this subsection we intend to establish the following  (`Plancherel identity on subspaces') Lemma \ref{lem:fff}. To be formal we give some notation first. For $W\in G(n,n-m)$, we define the `orthogonal complement' of $W$ as  
\[
Per(W):=\{x\in \F: x\cdot w=0 \,(\text{mod}\, p), w\in W\}.
\]
Note that unlike in the Euclidean spaces, here $W\cap Per(W)$ can be some non-trivial subspace. For  example  let $W=\text{span}\{(1,1)\}\subset \mathbb{F}_{2}^{2}$ then $Per(W)= W$. However,  the rank-nullity theorem of linear algebra (or  the solution of system of linear equations) implies  that for any subspace $W \subset\F$,
\begin{equation}\label{eq:rank}
\dim W+\dim Per(W)=n.
\end{equation}

The following result shows the connection between  $|E\cap (x_{W,j}+W)|, 1\leq j\leq p^{m}$ and the Fourier transform of $E$, and the identity \eqref{eq:kk} plays an important role  in the proof of Theorem \ref{thm:maintheorem} (b) and the proof of Proposition \ref{pro:Fourier}.
\begin{lemma}\label{lem:fff}
Use the above notation.  We have 
\begin{equation}\label{eq:kk}
\sum_{j=1}^{p^{m}} | E \cap (x_{W,j}+W)|^{2}=p^{-m}\sum_{\xi\in Per(W)} |\E(\xi)|^{2}.
\end{equation}
\end{lemma}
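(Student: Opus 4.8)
The plan is to expand the right-hand side of \eqref{eq:kk} using the definition of the Fourier transform and then recognize the resulting character sum as an indicator that detects which coset of $W$ a pair of points lies in. First I would write, for $\xi \in Per(W)$,
\[
|\E(\xi)|^{2} = \E(\xi)\overline{\E(\xi)} = \sum_{x,y\in E} e(-(x-y)\cdot \xi),
\]
and then sum over $\xi \in Per(W)$ to get
\[
\sum_{\xi\in Per(W)} |\E(\xi)|^{2} = \sum_{x,y\in E} \sum_{\xi\in Per(W)} e(-(x-y)\cdot \xi).
\]
The inner sum is a character sum over the subspace $Per(W)$: by orthogonality of characters on the group $Per(W)$, it equals $|Per(W)|$ if $x-y$ is orthogonal to all of $Per(W)$, and $0$ otherwise. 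The key algebraic point is that $x-y \perp Per(W)$ happens exactly when $x-y \in Per(Per(W)) = W$; the identity $Per(Per(W)) = W$ follows from \eqref{eq:rank} applied twice together with the obvious inclusion $W \subseteq Per(Per(W))$. Also, by \eqref{eq:rank}, $|Per(W)| = p^{\dim Per(W)} = p^{n-(n-m)} = p^{m}$.

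Next I would assemble these facts: the double sum collapses to $p^{m}$ times the number of pairs $(x,y)\in E\times E$ with $x-y\in W$, i.e.
\[
\sum_{\xi\in Per(W)} |\E(\xi)|^{2} = p^{m}\,\bigl|\{(x,y)\in E\times E : x-y\in W\}\bigr|.
\]
Finally I would observe that $x-y\in W$ is equivalent to $x$ and $y$ lying in the same coset of $W$, so the pair count decomposes over the $p^{m}$ cosets $x_{W,j}+W$ as $\sum_{j=1}^{p^{m}} |E\cap(x_{W,j}+W)|^{2}$. Dividing by $p^{m}$ gives \eqref{eq:kk}.

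I do not expect a serious obstacle here; this is a standard "Plancherel on a subgroup" computation. The one point that needs a little care is the identity $Per(Per(W)) = W$, since in this setting $W$ and $Per(W)$ need not be complementary (as the example $W = \mathrm{span}\{(1,1)\}\subset\mathbb{F}_2^2$ in the text shows), so I would want to be explicit that it still holds: $W\subseteq Per(Per(W))$ is immediate from the definition, and equality then follows because $\dim Per(Per(W)) = n - \dim Per(W) = \dim W$ by \eqref{eq:rank}. The character-orthogonality step should also be stated for the finite abelian group $(Per(W),+)$ with the pairing $(\xi, v)\mapsto e(v\cdot\xi)$, which is a nondegenerate pairing between $Per(W)$ and $\F/W$ — that nondegeneracy is precisely what makes the inner sum vanish unless $x-y\in W$.
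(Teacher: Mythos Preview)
Your proof is correct and is essentially the same Plancherel-on-a-subgroup computation the paper carries out. The only cosmetic difference is that the paper first rewrites $\widehat{E}(\xi)=\sum_{j}|E\cap(x_{W,j}+W)|\,e(-x_{W,j}\cdot\xi)$ for $\xi\in Per(W)$ before squaring and summing, whereas you expand $|\widehat{E}(\xi)|^{2}$ directly as a double sum over $E\times E$; both routes hinge on the same vanishing character sum (the paper isolates it as Lemma~\ref{lem:character}, you phrase it via $Per(Per(W))=W$).
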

\begin{proof}
Let $\xi \in Per(W)$ then
\begin{equation*}
\begin{aligned}
\widehat{E}(\xi)&=\sum_{x\in \F}E(x)e(-x\cdot \xi)\\
&=\sum_{j=1}^{p^{m}} \sum_{ w\in W}E(x_{W,j}+w)e(-(x_{W,j}+w)\cdot\xi)\\
&=\sum_{j=1}^{p^{m}} |E\cap (x_{W,j}+W)|e(-x_{W,j}\cdot\xi).
\end{aligned}
\end{equation*}
It follows that 
\begin{equation*}
\begin{aligned}
|\widehat{E}(\xi)|^{2}&=\sum_{j=1}^{p^{m}}|E\cap (x_{W,j}+W)|^{2} \\
&+\sum_{j\neq k} |E\cap (x_{W,j}+W)||E\cap (x_{W,k}+W)|e(-(x_{W,j}-x_{W,k})\cdot\xi).
\end{aligned}
\end{equation*}
Note that $(x_{W,j}-x_{W,k}) \notin W $ for any $j\neq k$. Together with the following Lemma \ref{lem:character}, we obtain
\[
\sum_{\xi\in Per(W)} e(-(x_{W,j}-x_{W,k})\cdot\xi)=0.
\] 
Thus we complete the proof.
\end{proof}

\begin{lemma}\label{lem:character}
Let $V\in G(n,k)$ and $x\not\in V$. Then 
\[
\sum_{y\in Per(V)} e(-x \cdot y)=0.
\]
\end{lemma}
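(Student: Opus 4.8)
The plan is to read the sum as a character sum over the finite abelian group $(Per(V),+)$ and to finish by an orthogonality argument, the only real content being a ``double annihilator'' identity $Per(Per(V))=V$ that replaces the usual orthogonal--complement reasoning (which is unavailable here, since $V\cap Per(V)$ need not be trivial).

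First I would fix $x\notin V$ and note that the map $\chi_x\colon Per(V)\to\mathbb{C}$ defined by $\chi_x(y)=e(-x\cdot y)$ is a homomorphism from $(Per(V),+)$ into the unit circle, because $e(-x\cdot(y+y'))=e(-x\cdot y)\,e(-x\cdot y')$. By the standard orthogonality relation for characters of a finite abelian group,
\[
\sum_{y\in Per(V)}\chi_x(y)=\begin{cases}|Per(V)|, & \chi_x\equiv 1,\\ 0, & \text{otherwise.}\end{cases}
\]
Hence it suffices to prove that $\chi_x$ is not the trivial character, i.e.\ that $x\cdot y\not\equiv 0\pmod p$ for some $y\in Per(V)$.

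Suppose instead that $x\cdot y\equiv 0$ for all $y\in Per(V)$; by definition this says $x\in Per(Per(V))$. I claim $Per(Per(V))=V$, which contradicts $x\notin V$. The inclusion $V\subseteq Per(Per(V))$ is immediate: if $v\in V$ then $v\cdot y\equiv 0$ for every $y\in Per(V)$ by the definition of $Per(V)$, so $v\in Per(Per(V))$. For equality it is enough to compare dimensions: applying \eqref{eq:rank} to $V$ gives $\dim Per(V)=n-k$, and applying \eqref{eq:rank} again to the subspace $Per(V)$ gives $\dim Per(Per(V))=n-(n-k)=k=\dim V$. A subspace containing $V$ and having the same dimension as $V$ must coincide with $V$, so $Per(Per(V))=V$, the desired contradiction.

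I do not expect a serious obstacle here; the one point that needs care is precisely the warning in the text that $Per$ is not an orthogonal complement, so that one cannot decompose $\F$ as $V\oplus Per(V)$ and argue pointwise. The substitute is the rank--nullity identity \eqref{eq:rank}, which is robust enough to be iterated and yields $Per(Per(V))=V$ directly; once that is in hand, the character--orthogonality step closes the argument.
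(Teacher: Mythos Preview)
Your proof is correct and essentially the same as the paper's: both show the character $y\mapsto e(-x\cdot y)$ is nontrivial on $Per(V)$ via a rank--nullity contradiction (the paper phrases it as $Per(V)\subseteq Per(V\cup\{x\})$ forcing $n-k\le n-k-1$, you as $x\in Per(Per(V))=V$), then conclude by orthogonality. The paper writes out the shift trick $e(-y_0\cdot x)\sum_y e(-x\cdot y)=\sum_y e(-x\cdot y)$ explicitly rather than citing character orthogonality, but this is the same content.
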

\begin{proof}
We claim that there exists $y_{0}\in Per(V)$ such that $y_{0}\cdot x\neq 0$. Suppose that $x\cdot y=0$ for any $y\in Per(V)$. It follows that 
\[
Per(V)\subset Per(V\cup \{x\}).
\]
Combing with the estimate \eqref{eq:rank} (rank-nullity theorem), we obtain  that $n-k\leq n-k-1$ which is impossible.   Since $Per(V)$ is a subspace and $e(-y_{0}\cdot x)\neq 1$, we obtain 
\[
e(-y_{0}\cdot x) \sum_{y\in Per(V)} e(-x \cdot y)=\sum_{y\in Per(V)} e(-x \cdot y),
\]
and hence $\sum_{y\in Per(V)} e(-x \cdot y)=0.$ 
\end{proof}

\begin{remark}
We do not know if the Lemma \ref{lem:character} also holds for  vector spaces over general finite fields. For that case we  will take nonprincipal character instead of $e^{\frac{2 \pi i x}{p}}$. We note that the Lemma \ref{lem:character} is the only place in this paper where the prime field $\mathbb{F}_{p}$ is needed.  
\end{remark}

%
%Note that $E_{V}$ is a function on $V$.
%For $\xi \in V$, define 
%\[
%\widehat{E_{V}} (\xi)=\sum_{b\in V}E_{V}(b)e^{-\frac{2\pi i \langle x, \xi \rangle}{p}}.
%\]
%Applying Plancherel identity to the function $E_{V}$, we have  
%\begin{equation}\label{eq:ffff}
%\sum_{b\in V}E_{V}(b)^{2}=p^{-m}\sum_{\xi \in V}|\widehat{E_{V}}(\xi)|^{2}.
%\end{equation}

\subsection{Counting subspaces of $\F$}

In the following, we collect some basic identities for ${n \choose m}_{p}$  for our   later use.  For more details see \cite[Chapter 6]{Cameron}, \cite{Ko}.

\begin{lemma} \label{lem:iidentity}
Let  $1\leq m\leq n$.
 
(1) ${n \choose 0}_{p}:={n \choose n }_{p}=1$, \, ${n \choose m}_{p}={n \choose n-m }_{p}.$

(2) ${n \choose m}_{p}={n-1 \choose m }_{p} +p^{n-m}{n-1 \choose m-1 }.$

(3) ${n \choose m}_{p}={n-1 \choose m-1 }_{p} +p^{m}{n-1 \choose m }.$
\end{lemma}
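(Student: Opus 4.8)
The plan is to prove all three identities directly from the explicit formula \eqref{eq:bino}, reducing each to a short algebraic manipulation. I expect no conceptual obstacle here, only the bookkeeping of the index ranges appearing in the products.

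First I would clean up \eqref{eq:bino}. Factoring $p^{i}$ out of the $(i+1)$-st factor of both numerator and denominator, the common power $p^{0+1+\cdots+(m-1)}$ cancels and one is left with the reduced form
\[
{n \choose m}_{p}=\frac{\prod_{i=1}^{m}(p^{n-i+1}-1)}{\prod_{i=1}^{m}(p^{i}-1)}=\frac{\prod_{i=1}^{n}(p^{i}-1)}{\prod_{i=1}^{m}(p^{i}-1)\,\prod_{i=1}^{n-m}(p^{i}-1)}.
\]
This symmetric expression makes part (1) immediate: the right-hand side is manifestly invariant under $m\mapsto n-m$, which gives ${n \choose m}_{p}={n \choose n-m}_{p}$, while the boundary values ${n \choose 0}_{p}={n \choose n}_{p}=1$ come from the empty-product convention (and direct substitution $m=n$). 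Throughout I would adopt the standard convention ${a \choose b}_{p}=0$ for $b<0$ or $b>a$, so that the recurrences (2) and (3) also hold at the endpoints $m=n$ and $m=1$.

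For part (3) I would place the right-hand side ${n-1 \choose m-1}_{p}+p^{m}{n-1 \choose m}_{p}$ over the common denominator $\prod_{i=1}^{m}(p^{i}-1)$ using the reduced form above, where the two numerators are $\prod_{j=n-m+1}^{n-1}(p^{j}-1)$ and $\prod_{j=n-m}^{n-1}(p^{j}-1)$ respectively. Factoring the common product $\prod_{j=n-m+1}^{n-1}(p^{j}-1)$ out of the combined numerator leaves the bracket $(p^{m}-1)+p^{m}(p^{n-m}-1)=p^{n}-1$, and restoring this factor recovers exactly $\prod_{j=n-m+1}^{n}(p^{j}-1)\big/\prod_{i=1}^{m}(p^{i}-1)={n \choose m}_{p}$.

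Finally, part (2) I would deduce from (1) and (3) rather than repeat the computation: applying (1), then (3) with $m$ replaced by $n-m$, and then (1) twice more to the two resulting coefficients, turns ${n \choose m}_{p}={n \choose n-m}_{p}$ into ${n-1 \choose m}_{p}+p^{n-m}{n-1 \choose m-1}_{p}$. The only real care needed is to keep the endpoint conventions consistent in each application, and to track the shifting index ranges $[n-m+1,\,n-1]$ versus $[n-m,\,n-1]$ in part (3); this is the sole place where an arithmetic slip could occur, so I would verify those ranges explicitly. As a remark I would note the purely combinatorial alternatives—counting the $m$-dimensional subspaces of $\F$ according to whether they lie inside a fixed coordinate hyperplane gives (2) and (3), and the involution $W\mapsto Per(W)$ coming from \eqref{eq:rank} gives a bijective proof of the symmetry in (1)—but I would present the algebraic argument as the main proof.
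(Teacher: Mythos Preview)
Your argument is correct: the reduced product form of \eqref{eq:bino} makes the symmetry in (1) transparent, your verification of (3) via the common denominator and the identity $(p^{m}-1)+p^{m}(p^{n-m}-1)=p^{n}-1$ is clean, and deducing (2) from (1) and (3) by the substitution $m\mapsto n-m$ is the efficient route. The paper itself does not prove this lemma at all; it merely records the identities and points to the references \cite{Cameron}, \cite{Ko}, so your proposal in fact supplies strictly more than the paper does.
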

%
%Observe that
%\begin{equation}
%A(n,m)=\bigcup_{V\in G(n,n-m)} \bigcup_{x\in V}(x+V^{\perp}),
%\end{equation}
%and then
%\begin{equation}
%|A(n,m)|={n \choose n-m}_{p} p^{n-m}.
%\end{equation}

\begin{lemma}\label{lem:subspace}
Let $\xi$ be a non-zero vector of $\F$. 

(1) $|\{V\in G(n,m): \xi\in V\}|={n-1 \choose m-1}_{p}$. 

(2) $|\{V\in G(n,m): \xi\in Per(V)\}|={n-1 \choose m}_{p}$.
\end{lemma}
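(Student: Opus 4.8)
The plan is to count in two ways a suitable set of incidences, or equivalently to reduce both statements to the known formula \eqref{eq:bino} for Gaussian binomials together with the duality identity in Lemma \ref{lem:iidentity}(1). For part (1), I would fix the non-zero vector $\xi$ and count the number of ordered bases $(v_1,\dots,v_m)$ of $m$-dimensional subspaces $V$ with $\xi \in V$; since all non-zero vectors lie in the same number of such subspaces by the transitivity of $GL(n,\mathbb{F}_p)$ on $\F\setminus\{0\}$, this number is independent of $\xi$. Concretely, every $m$-dimensional $V$ containing $\xi$ is obtained by choosing $\xi$ as the first basis vector (there are $p^n-1$ choices of a non-zero $\xi$, but $\xi$ is now fixed) and then extending to an independent set: $v_2$ has $p^n-p$ choices, \dots, $v_m$ has $p^n-p^{m-1}$ choices; each $V$ so arises from $(p^m-p)(p^m-p^2)\cdots(p^m-p^{m-1})$ ordered extensions of the fixed first vector $\xi$. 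Dividing gives exactly ${n-1 \choose m-1}_p$ after simplification — alternatively, and more cleanly, one identifies subspaces through $\xi$ with $(m-1)$-dimensional subspaces of the quotient $\F/\langle\xi\rangle \cong \mathbb{F}_p^{n-1}$, which immediately yields ${n-1 \choose m-1}_p$.

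For part (2), I would use Lemma \ref{lem:character}-style linear algebra: $\xi \in Per(V)$ means $v\cdot\xi = 0$ for all $v\in V$, i.e. $V$ is contained in the hyperplane $H_\xi := \{x : x\cdot\xi = 0\}$, which has dimension $n-1$ by the rank-nullity identity \eqref{eq:rank}. So $\{V\in G(n,m): \xi\in Per(V)\}$ is precisely the set of $m$-dimensional subspaces of the $(n-1)$-dimensional space $H_\xi$, and there are ${n-1 \choose m}_p$ of these by \eqref{eq:bino}. A quick consistency check: combining (1) and (2) with Lemma \ref{lem:iidentity}(1) and (3), the count of $V$ with $\xi\in V$ plus $p^{m}$ times something should reconcile with ${n \choose m}_p$ — indeed ${n-1\choose m-1}_p + p^{m}{n-1\choose m}_p = {n\choose m}_p$, so parts (1) and (2) are the two pieces of the Pascal-type recursion, which is a reassuring sanity check rather than part of the proof.

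I do not expect any serious obstacle here; both parts are essentially bookkeeping once one spots the right bijection (quotient by $\langle\xi\rangle$ for (1), restriction to the hyperplane $H_\xi$ for (2)). The only point requiring a line of care is the claim that $Per(V)$ behaves like an orthogonal complement well enough for the hyperplane description in (2) — this is exactly where \eqref{eq:rank} is invoked, guaranteeing $\dim H_\xi = n-1$ even though $H_\xi$ may intersect its own "perp" nontrivially. With that in hand, both identities follow directly from \eqref{eq:bino}.
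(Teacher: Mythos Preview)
Your proposal is correct and matches the paper's own proof: the paper proves (1) by exactly the same direct count of extensions of $\xi$ to an ordered independent list (dividing $(p^n-p)\cdots(p^n-p^{m-1})$ by $(p^m-p)\cdots(p^m-p^{m-1})$), and proves (2) by exactly your observation that $\xi\in Per(V)$ is equivalent to $V\subset Per(\xi)$, which is an $(n-1)$-dimensional space by \eqref{eq:rank}. Your quotient-space alternative for (1) and the Pascal-recursion sanity check are not in the paper, but they are correct and the former is arguably cleaner than the paper's counting.
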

\begin{proof}
First note that if $m=1$ then $(1)$ holds. For the case $2\leq m<n$, note that to obtain a $m$-dimensional subspace which contains the given vector $\xi$, it is sufficient to choose another $m-1$  vectors such that these $m-1$ vectors and the vector $\xi$ span a $m$-dimensional subspace. For the choice of  the first vector, we have $p^{n}-p$ choices from $\F$ (except the vectors from the subspace of $\F$ which spanned by $\xi$).  For the choice of the second vector, we have $p^{n}-p^{2}$ choices to make sure that the second vector, the first vector, and the  vector $\xi$ span a $3$-dimensional subspace. We continue to do this until we choose $m-1$ vectors. In the end we have $(p^{n}-p)(p^{n}-p^{2})\cdots (p^{n}-p^{m-1})$ amount of  choices. Note that for each $m$-dimensional subspace which contains vector $\xi$, there are amount of $(p^{m}-p)(p^{n}-p^{2})\cdots (p^{m}-p^{m-1})$ choices which generate (or span) the same subspace. It follows that (see \eqref{eq:bino} for the definition of ${n \choose m}_{p}$) the  amount of $m$-dimensional plane which contain $\xi$ is 
\[
\frac{(p^{n}-p)(p^{n}-p^{2})\cdots (p^{n}-p^{m-1})}{(p^{m}-p)(p^{m}-p^{2})\cdots (p^{m}-p^{m-1})}={n-1 \choose m-1}_{p}.
\]

To establish $(2)$, first note that $\dim Per(\xi)=n-1$. Observe that 
\[
\{V\in G(n,m): \xi\in Per(V)\}
\] 
is the collection of all the $m$-dimensional subspace of $Per(\xi)$, which the conclusion follows.
\end{proof}

%&=p^{m-n} \left({n-1 \choose m}_{p}(p^{n}|E|-|E|^{2}) +{n \choose m}_{p}|E|^{2} \right)\\
%&= |E|p^{m}{n-1 \choose m}_{p}+p^{m-n}|E|^{2}\left({n \choose m}_{p}-{n-1 \choose m}_{p}\right)

\section{Proof of Theorem \ref{thm:maintheorem} }\label{sec:main}

Let $\Theta \subset G(n,n-m)$. Recall that  
\[
\Theta'= \{x_{W,j}+  W: 1\leq j\leq p^{m}, W \in \Theta\}. 
\]

\begin{lemma}\label{lem:keylemma}
Let $E\subset \F$, $\Theta \subset G(n, n-m)$. Then
\begin{equation}
\e(E, \Theta') 
\leq \min \left\{ |E||\Theta|+2|E|^{2}  p^{(n-m-1)m}, 2| E| p^{(n-m)m}+| E| ^{2}| \Theta| p^{-m} \right\}.
\end{equation}
\end{lemma}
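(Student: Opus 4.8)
\textbf{Proof plan for Lemma \ref{lem:keylemma}.}

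The plan is to establish the two bounds inside the minimum separately, each by expanding $\e(E,\Theta')$ via counting incidences and then applying the subspace-counting identities from Lemma \ref{lem:subspace}. Recall that $\e(E,\Theta') = \sum_{W\in\Theta}\sum_{j=1}^{p^m}|E\cap(x_{W,j}+W)|^2$, and that $|E\cap(x_{W,j}+W)|^2$ counts ordered pairs $(x,y)\in E\times E$ lying in the common coset $x_{W,j}+W$, i.e. pairs with $x-y\in W$. Summing over $j$ first, we get $\sum_{j=1}^{p^m}|E\cap(x_{W,j}+W)|^2 = |\{(x,y)\in E\times E : x-y\in W\}|$. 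Splitting off the diagonal $x=y$ (which contributes $|E|$ for each $W$, hence $|E||\Theta|$ in total) leaves the off-diagonal count $|\{(x,y)\in E\times E : x\neq y,\ x-y\in W\}|$. Summing this over $W\in\Theta$ and exchanging the order of summation, each nonzero difference vector $\xi=x-y$ is counted once for every $W\in\Theta$ with $\xi\in W$; by Lemma \ref{lem:subspace}(1) there are at most ${n-1\choose n-m-1}_p \le 2p^{(n-m-1)m}$ such $W$ in all of $G(n,n-m)$ (using \eqref{eq:condition} with parameters $n-1$ and $n-m-1$, noting $(n-m-1)\cdot((n-1)-(n-m-1)) = (n-m-1)m$). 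Since the number of ordered pairs with $x\neq y$ is at most $|E|^2$, this gives the first bound $\e(E,\Theta')\le |E||\Theta| + 2|E|^2 p^{(n-m-1)m}$.

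For the second bound I would instead use the Fourier-analytic identity \eqref{eq:kk} from Lemma \ref{lem:fff}. For each $W\in\Theta$ we have $\sum_{j=1}^{p^m}|E\cap(x_{W,j}+W)|^2 = p^{-m}\sum_{\xi\in Per(W)}|\E(\xi)|^2$. Summing over $W\in\Theta$ and again exchanging the order of summation, $\e(E,\Theta') = p^{-m}\sum_{\xi\in\F}|\E(\xi)|^2 \cdot |\{W\in\Theta : \xi\in Per(W)\}|$. Now separate the $\xi=0$ term, which contributes $p^{-m}|E|^2|\Theta|$ (since $\E(0)=|E|$ and $0\in Per(W)$ always), from the $\xi\neq 0$ terms. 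For $\xi\neq 0$, bound $|\{W\in\Theta:\xi\in Per(W)\}|$ by the count over all of $G(n,n-m)$, which by Lemma \ref{lem:subspace}(2) equals ${n-1\choose n-m}_p \le 2p^{(n-m)m}$ (here $(n-m)\cdot((n-1)-(n-m)) = (n-m)(m-1)$, so one wants $p^{(n-m)(m-1)}$; a careful check of which Gaussian coefficient appears and the exponent bookkeeping is needed here) — and then use the Plancherel identity $\sum_{\xi\neq 0}|\E(\xi)|^2 \le \sum_{\xi\in\F}|\E(\xi)|^2 = p^n|E|$. This yields $\e(E,\Theta') \le |E|^2|\Theta|p^{-m} + p^{-m}\cdot 2p^{(n-m)m}\cdot p^n|E|\cdot p^{-n}$-type expression, which after the correct exponent accounting should collapse to $2|E|p^{(n-m)m} + |E|^2|\Theta|p^{-m}$.

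The main obstacle I anticipate is the exponent bookkeeping in the second bound: matching the Gaussian binomial coefficient ${n-1\choose n-m}_p$ against the target power $p^{(n-m)m}$ requires being careful about whether one uses condition \eqref{eq:condition} with the pair $(n-1, n-m)$ or $(n-1, m-1)$, and tracking the stray factors of $p^{-m}$ and $p^n$ from Plancherel. The identity \eqref{eq:rank}, $\dim W + \dim Per(W) = n$, together with Lemma \ref{lem:subspace}(2) and Lemma \ref{lem:iidentity}, should reconcile these. The first bound is comparatively routine: it is purely a combinatorial incidence count and uses no Fourier analysis, only the "few subspaces contain a fixed vector" estimate. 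Once both bounds are in hand, the lemma follows by taking the minimum, since both chains of inequalities hold simultaneously for the same $\e(E,\Theta')$.
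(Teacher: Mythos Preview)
Your approach matches the paper's exactly: the first bound via the diagonal/off-diagonal pair count together with Lemma \ref{lem:subspace}(1), and the second via Lemma \ref{lem:fff}, Lemma \ref{lem:subspace}(2), and Plancherel. The exponent bookkeeping you flag is routine: ${n-1\choose n-m}_p = {n-1\choose m-1}_p \le 2p^{(m-1)(n-m)}$ by \eqref{eq:condition}, so the nonzero-$\xi$ contribution is at most $p^{-m}\cdot 2p^{(m-1)(n-m)}\cdot p^{n}|E| = 2|E|\,p^{m(n-m)}$, as required.
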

\begin{proof}
We first show the estimate
\[
\e(E, \Theta')\leq |E||\Theta|+2|E|^{2}  p^{(n-m-1)m}.
\] 
Let $x\in \F$. Since for any $W\in \Theta$ there is only one coset of $W$ which contains $x$, we obtain   
\[ 
\sum_{V\in \Theta'} V(x)=|\Theta|.
\]
By Lemma \ref{lem:subspace}, there are   ${n-1 \choose n- m-1}_{p}$ amount of  $(n-m)$-dimensional subspaces containing the given non-zero vector of $\F$. Then
\begin{equation*}
\begin{aligned}
\e(E, \Theta')
&=\sum_{V\in \Theta'} \left(\sum_{x\in E}V(x) \right)^{2}\\
&=\sum_{V\in \Theta'} \left(\sum_{x\in E}V(x)+\sum_{x\neq y \in E} V(x)V(y) \right)\\
&\leq |E||\Theta|+|E|(|E|-1){n-1 \choose n-m-1}_{p}\\
&\leq |E||\Theta|+2|E|^{2}p^{(n-m-1)m}.
\end{aligned}
\end{equation*}

Now we turn to the other estimate. Applying Lemma \ref{lem:subspace} we have
\begin{equation*}
\begin{aligned}
\sum_{W\in \Theta} \sum_{\xi\in Per(W)}|\E(\xi)|^{2}-|\Theta||E|^{2} &\leq { n-1 \choose m-1}_{p}\sum_{\xi \in \F\backslash \{0\}} |\E(\xi)|^{2}\\
&\leq { n-1 \choose m-1}_{p} p^{n}|E|.
\end{aligned}
\end{equation*}
Together with  Lemma \ref{lem:fff}, we obtain 
\begin{equation*}
\begin{aligned}
\e(E, \Theta')
&=\sum_{W\in \Theta} \sum_{j=1}^{p^{m}}|E\cap(x_{W,j}+W)|^{2}\\
& =p^{-m}\sum_{W\in \Theta} \sum_{\xi\in Per(W)}|\E(\xi)|^{2}\\
&=p^{-m}\left(\sum_{V\in \Theta} \sum_{\xi\in V}|\E(\xi)|^{2}-|\Theta||E|^{2}+|\Theta||E|^{2}\right)\\
&\leq p^{-m}p^{n}|E| {n-1 \choose m-1}_{p} +|E|^{2}|\Theta| p^{-m}\\
&\leq 2p^{(n-m)m}|E|+|E|^{2}|\Theta| p^{-m}.
\end{aligned}
\end{equation*}
Thus we complete the proof.
\end{proof}

\begin{remark}
Let  $|E|\leq p^{m}$ and $\Theta\subset G(n,n-m)$. Then by the estimate  \eqref{eq:condition} and Lemma \ref{lem:iidentity} (1)     
\[
 |\Theta| \leq { n\choose n-m}_{p}\leq 2p^{m(n-m)}.
 \]
Thus
\[
|E||\Theta|+2|E|^{2}  p^{(n-m-1)m} \leq 
2| E| p^{(n-m)m}+| E| ^{2}| \Theta| p^{-m}.
\]
Therefore, in the proof of Theorem \ref{thm:maintheorem}, if $|E|\leq p^{m}$ then we  use the  estimate 
\begin{equation}\label{eq:a}
\e(E, \Theta') \leq |E||\Theta|+2|E|^{2}  p^{(n-m-1)m}.
\end{equation} 
For the case $|E|>p^{m}$, we   use the estimate 
\begin{equation}\label{eq:b}
\e(E, \Theta') \leq 2| E| p^{(n-m)m}+| E| ^{2}| \Theta| p^{-m}.
\end{equation}

\end{remark}

\begin{proof}[Proof of Theorem \ref{thm:maintheorem}]

First we prove  $(a).$ Let 
 \[
\Theta=\{W\in G(n,n-m): |\pi ^{W} (E)|\leq N\}.
\]
Applying the estimates \eqref{eq:argument} (outline of the method), \eqref{eq:a}, we obtain
\begin{equation*}
\begin{aligned}
|\Theta||E|^{2} &\leq \e(E,\Theta')N\\
&\leq (|E||\Theta|+2|E|^{2}p^{(n-m-1)m})N.
\end{aligned}
\end{equation*}
It follows that  (recall  that $N\leq |E|/2$)
\[
|\Theta|\leq 4p^{(n-m)m-m}N.
\]

Now we prove  $(b)$. For $\delta \in (0,1)$, let 
\[
\Theta=\{W\in G(n,n-m): |\pi^{W} (E)|\leq \delta p^{m}\}.
\]
Applying the estimates \eqref{eq:argument}, \eqref{eq:b}, we obtain 
\begin{equation*}
\begin{aligned}
| \Theta||E|^{2} &\leq \e(E,\Theta')\delta p^{m}\\
&\leq (2|E|p^{(n-m)m}+|E|^{2}|\Theta| p^{-m})\delta p^{m},
\end{aligned}
\end{equation*}
and
\[
|\Theta||E|(1-\delta)\leq 2\delta p^{(n-m)m+m}.
\]
Then
\[
|\Theta| \leq 2 \left(\frac{\delta}{1-\delta}\right)p^{(n-m)m+m}|E|^{-1}.
\]
Thus we complete the proof.
\end{proof}

\section{Proofs of Theorems \ref{thm:small} and \ref{thm:large}}\label{sec:random}

\subsection{Percolation on $\FF_{p}^{d}$ }

The random model we used here is related to many other well known models, for example: 
Erd\H{o}s-R\'enyi-Gilbert model in random graphs,  percolation theory on the graphs, and  Mandelbrot percolation in fractal geometry. We show this model on $\F$ in the following. For an application of this model to find sets with small Fourier coefficient, see \cite[Theorem 5.2]{Babai}, \cite{Chen2017}. 

Let $0<\delta<1$. We choose each point of $\F$ with probability $\delta$ and remove it with probability $1-\delta$, all choices being independent of each other. Let $E=E^{\omega}$ be the collection of these chosen points. Let $\Omega=\Omega (\F, \delta)$ be our probability space which consists of all the possible sets $E^{\omega}$.

We prove Theorems  \ref{thm:small} and \ref{thm:large} by choose $\delta=p^{s-n}$ in the above  model, and show that the random set $E$  has the desired properties with high probability when $p$ is large enough. For convenience, we formulate a special large deviations estimate in the following. For more background and details on large deviations estimate, see Alon and Spencer \cite[Appendix A]{Alon}.

\begin{lemma}[Chernoff bound]\label{lem:largedeviations}
Let $\{X_j\}_{j=1}^N$ be a sequence independent Bernoulli random variables with success probability $\delta'$. Let $\mu=N\delta'$, then    
\[
\PP\left(\sum^N_{j=1} X_j < \mu  /2 \right)\leq e^{-\mu /16}.
\]
\end{lemma}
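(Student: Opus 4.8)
The plan is to use the standard exponential moment (Chernoff) method. Write $S=\sum_{j=1}^{N}X_{j}$, so that $\EE[S]=\mu$. For any parameter $t>0$, Markov's inequality applied to the nonnegative random variable $e^{-tS}$ gives
\[
\PP\left(S<\mu/2\right)=\PP\bigl(e^{-tS}>e^{-t\mu/2}\bigr)\leq e^{t\mu/2}\,\EE\bigl[e^{-tS}\bigr],
\]
where the first equality uses that $x\mapsto e^{-tx}$ is strictly decreasing.

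Next I would exploit independence to factor the moment generating function: $\EE[e^{-tS}]=\prod_{j=1}^{N}\EE[e^{-tX_{j}}]=\prod_{j=1}^{N}\bigl(1-\delta'+\delta'e^{-t}\bigr)$. Applying the elementary inequality $1+x\leq e^{x}$ to each factor with $x=-\delta'(1-e^{-t})$ yields $\EE[e^{-tS}]\leq e^{-N\delta'(1-e^{-t})}=e^{-\mu(1-e^{-t})}$. Combining this with the previous display gives
\[
\PP\left(S<\mu/2\right)\leq\exp\bigl(-\mu\,(1-e^{-t}-t/2)\bigr)\qquad\text{for all }t>0.
\]

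Finally I would optimize the exponent over $t>0$: the function $g(t)=1-e^{-t}-t/2$ satisfies $g'(t)=e^{-t}-1/2$ and $g''(t)=-e^{-t}<0$, so $g$ attains its maximum at $t=\ln 2$, where $g(\ln 2)=(1-\ln 2)/2$. Since $\ln 2<7/8$, we have $(1-\ln 2)/2>1/16$, and therefore $\PP(S<\mu/2)\leq e^{-\mu(1-\ln 2)/2}\leq e^{-\mu/16}$, as claimed. There is no genuine obstacle here; the argument is entirely routine, and the only point requiring slight care is the constant-chasing in the last step — one may equally well just substitute $t=\ln 2$ from the outset and skip the optimization. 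See Alon and Spencer \cite{Alon} for the general form of this estimate.
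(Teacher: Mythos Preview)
Your proof is correct: the exponential-moment argument, the factorization via independence, the bound $1+x\le e^{x}$, and the optimization at $t=\ln 2$ are all carried out cleanly, and the final numerical check $(1-\ln 2)/2>1/16$ is valid. The paper itself does not prove this lemma at all --- it merely states it and refers the reader to Alon and Spencer \cite[Appendix~A]{Alon} --- so you have supplied a complete argument where the paper gives none.
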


%\begin{proof}
%Let $\lambda=1/4$. Applying Markov's inequality to the random variable $e^{-\lambda \sum^N_{j=1} X_j}$, this gives 
%\begin{equation}\label{eq:markov}
%\begin{aligned}
%\PP( \sum^N_{j=1} X_j < \mu /2)&=\PP (e^{-\lambda \sum^N_{j=1} X_j}  > e^{-\lambda \mu /2})\\
%& \leq  e^{\lambda \mu /2} \EE(e^{-\lambda \sum^N_{j=1} X_j})\\
%&=e^{\lambda \mu /2 } \prod^N_{j=1} \EE( e^{ -\lambda X_j}).
%\end{aligned}
%\end{equation}
%The last equality holds since $\{X_j\}_j$ is a sequence independent random variables. 
%
%For any  $\m x \m \leq 1$ we have  
% \[
% e^{x} \leq 1+x +x^{2}.
% \] 
%Since $\m \lambda  X_j \m \leq 1$, we have 
% \[
% e^{\lambda X_j}\leq 1 + \lambda X_j +\lambda^{2} X_j^{2}, 
% \]
% and hence 
%\begin{equation*}
%\EE( e^{-\lambda X_j}) \leq 1 -\lambda \delta' + \lambda^{2}\delta'.
%\end{equation*}
%Combining this with \eqref{eq:markov}, we have
%\[
%\PP( \sum^N_{j=1} X_j < \mu /2)\leq e^{\lambda\mu/2}e^{-\lambda \mu  + \lambda^{2} \mu}=e^{-\mu/16}.
%\]
%Thus we finish the proof.
%\end{proof}

\subsection{Projections of random sets}

\begin{proof}[Proof of Theorem \ref{thm:small}]
Let $\delta=p^{s-n}$. We consider the random model $\Omega(\F, \delta).$ 
Let $W\in G(n,n-m)$. Observe that $|\pi^{W}(E)|$ is a binomial distribution with parameters $p^{m}$ and $\delta'$ where 
\[
\delta'=1-(1-\delta)^{p^{n-m}}.
\]
Let $\mu=p^{m}\delta'$. Since $1+x\leq e^{x}$ holds for all $x$, $e^{x} \leq 1+x +5x^{2}/6$ holds for $|x|\leq 1$, and $s\leq m$, we obtain 
\begin{equation}\label{eq:mu}
\begin{aligned}
\mu= p^{m}\delta'&\geq p^{m}(1-e^{-\delta p^{n-m}})
\\
&=p^{m}(1-e^{-p^{s-m}})\geq p^{m}(p^{s-m}-5p^{2(s-m)}/6)\\
&\geq p^{s}/6.
\end{aligned}
\end{equation}

Applying Lemma \ref{lem:largedeviations} to $|\pi^{W}(E)|$ we have 
\[
\PP\left(|\pi^{W}(E)| < \mu  /2 \right)\leq e^{-\mu /16}.
\]
Since there are ${ n \choose m}_{p}$  elements  of  $G(n, n-m)$, we have 
\begin{equation}\label{eq:p}
\begin{aligned}
\PP (  \text{exists }  W \in G(n,n-m) &\text{ such that } |\pi^{W}(E)|\leq \mu/2   )\\ 
& \leq { n \choose m}_{p} e^{-\mu/16}\\
&\leq 2p^{m(n-m)}e^{-p^{s} /96}\\&\rightarrow 0 \text{ as } p\rightarrow \infty.
\end{aligned}
\end{equation}

Note that $p^{s}/2 \leq |E| \leq 2 p^{s}$ with high probability ($>1/2$). This follows by applying Chebyshev's inequality, which says that
 
\begin{equation*}
\begin{aligned}
\PP(||E| - p^{n}\delta|> \frac{1}{2}p^{n}\delta)&\leq \frac{4p^{n}\delta(1-\delta)}{(p^{n}\delta)^{2}}\\
&\leq  \frac{4}{p^{s}}\rightarrow 0 \text{ as } p \rightarrow \infty.
\end{aligned}
\end{equation*}
Together with the estimates \eqref{eq:p} and \eqref{eq:mu}, we conclude that there exists $E\in \Omega(\F, \delta)$ with
\[
p^{s}/2\leq |E|\leq 2p^{s} \text{ and }|\pi^{W}(E)|> \mu /2\geq p^{s}/12 \text{ for all $W\in G(n,n-m)$ } 
\]
when $p$ is large enough. Thus we complete the proof.
\end{proof}

\begin{remark}
Let  $s\leq m$ and $\delta=p^{s-n}$. Then the above proof implies that for ``almost every" $E\in \Omega(\F, \delta)$ we have 
\[
p^{s}/2\leq |E|\leq 2p^{s} \text{ and }|\pi^{W}(E)|\geq |E|/24 \text{ for all $W\in G(n,n-m)$}. 
\]
Roughly speaking, there is no exceptional projections for almost every $E\in \Omega(\F, \delta)$. 
\end{remark}

\begin{proof}[Proof of Theorem \ref{thm:large}]
Let $\delta=p^{s-n}$. We again consider the random model $\Omega(\F, \delta)$. Let  $V\in A(n,n-m)$. Note that $|E\cap V|$ is a binomial distribution with parameters $p^{n-m}$ and $\delta$. Thus 
\begin{equation*}
\begin{aligned}
\PP(E \cap V =\emptyset)&=(1-\delta)^{p^{n-m}}\\
&\leq e^{-\delta p^{n-m}}=e^{-p^{s-m}}.
\end{aligned}
\end{equation*}
Observe that  
\begin{equation*}
|A(n,n-m)|=p^{m} {n \choose n-m}_{p}\leq 2p^{m(n-m+1)}.
\end{equation*} 
Now we have (recalling $s>m$)
\begin{equation*}
\begin{aligned}
\PP( \text{exists } V &\in A(n,n-m) \text{ such that } E \cap V = \emptyset)\\
& \leq 2p^{m(n-m+1)}e^{-p^{s-m}}\rightarrow 0 \text{ as } p\rightarrow \infty.
\end{aligned}
\end{equation*}
It follows that  
\begin{equation*}
\begin{aligned}
\PP(\text{exists } W\in G(n,n-m) &\text{ such  that } |\pi^{W}(E)| <p^{m}) \\ &\rightarrow 0 \text{ as } p\rightarrow \infty.
\end{aligned}
\end{equation*} 
Again by Chebyshev's inequality, we have  $p^{s}/2 \leq |E| \leq 2 p^{s}$ with high probability  ($>1/2$) provided $p$ is large enough. Thus we complete the proof.
\end{proof}

\begin{remark}
Let  $s> m$ and $\delta=p^{s-n}$. Then the above proof implies that for ``almost every" $E\in \Omega(\F, \delta)$ we have 
\[
p^{s}/2\leq |E|\leq 2p^{s} \text{ and }|\pi^{W}(E)|= p^{m} \text{ for all $W\in G(n,n-m)$}. 
\]
Roughly speaking, there is no exceptional projections for almost every $E\in \Omega(\F, \delta)$. 
\end{remark}

\section{Proof of proposition \ref{pro:Fourier}}\label{sec:Fourier}

\begin{proof}[Proof of Proposition \ref{pro:Fourier}]
First we use the same argument as in the outline of the method. Let $W\in G(n,n-m)$, and $x_{W,j}+W, 1\leq j\leq p^{m}$ be the different cosets of $W$. Then the Cauchy-Schwarz inequality implies  
\begin{equation*}
|E|^{2}\leq |\pi^{W}(E)|\sum_{j=1}^{p^{m}}|E\cap (x_{W,j}+W)|^{2}.
\end{equation*}
Applying  Lemma \ref{lem:fff} and the Fourier decay of $E$, we obtain 
\begin{equation*}
\begin{aligned}
|\sum_{j=1}^{p^{m}}|E\cap (x_{W,j}+W)|^{2}&=p^{-m}\sum_{\xi \in Per(W)}|\E(\xi)|^{2}\\
&\leq p^{-m}(|E|^{2}+p^{m}C^{2}|E|^{2\alpha}).
\end{aligned}
\end{equation*}
Then
\begin{equation*}\label{eq:key}
|E|^{2}\leq |\pi^{W}(E)|(p^{-m}|E|^{2}+C^{2}|E|^{2\alpha}) .
\end{equation*}
It follows that if $p^{-m}|E|^{2}\leq C^{2}|E|^{2\alpha}$ then 
\[
|\pi^{W}(E)| \geq |E|^{2-2\alpha}/2C^{2}. 
\]
On the other hand, if $p^{-m}|E|^{2}> C^{2}p^{m}|E|^{2\alpha}$ then 
\[
|\pi^{W}(E)|\geq p^{m}/2.
\] 
Then $(a)$ and $(b)$ hold. Now we turn to $(c)$. Note that if $|\pi^{W}(E)|>p^{m}-1$ then $|\pi ^{m}(E)|=p^{m}.$ Thus it is sufficient to show that  
\begin{equation}\label{eq:last}
\frac{|E|^{2}}{p^{-m}|E|^{2}+C^{2}|E|^{2\alpha}} >p^{m}-1.
\end{equation}
By calculation if  
\[
|E|> p^{\frac{m}{1-\alpha}}(2C^{2})^{\frac{1}{2-2\alpha}},  
\]
then the estimate \eqref{eq:last} holds. Thus we complete the proof.
\end{proof}

\section{Further results }

In the following, we extend an identity of 
Murphy and Petridis \cite{MurphyPetridis} for general $1\leq m\leq n$. They proved the special case $m=n-1$ for the following identity \eqref{eq:mur}. We prove it in two different ways.   
The first proof essentialy comes from  \cite{MurphyPetridis}. The second proof depends on the discrete Fourier transformation.

\begin{proposition}\label{pro:i}
Let $E\subset \F$. Then 
\begin{equation}\label{eq:mur}
\e(E, A(n,m))= |E|p^{m}{n-1 \choose m}_p +|E|^{2}{n-1 \choose m-1}_{p}.
\end{equation}
\end{proposition}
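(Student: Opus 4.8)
The plan is to count, in two ways, the quantity
\[
\e(E, A(n,m)) = \sum_{V \in A(n,m)} |E \cap V|^{2} = \sum_{V \in A(n,m)} \Bigl(\sum_{x \in E} V(x)\Bigr)^{2},
\]
by expanding the square and swapping the order of summation. Writing $\bigl(\sum_{x\in E} V(x)\bigr)^2 = \sum_{x \in E} V(x) + \sum_{x \neq y \in E} V(x)V(y)$, I get two contributions: a ``diagonal'' term $\sum_{x \in E} |\{V \in A(n,m) : x \in V\}|$ and an ``off-diagonal'' term $\sum_{x \neq y \in E} |\{V \in A(n,m) : x, y \in V\}|$. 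So the whole proof reduces to two combinatorial counts: (i) how many $m$-dimensional affine planes pass through a fixed point, and (ii) how many pass through a fixed pair of distinct points.

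For (i): by translation invariance, the number of planes in $A(n,m)$ through a fixed point equals the number through the origin, which is the number of $m$-dimensional linear subspaces, namely ${n \choose m}_p$. For (ii): again by translating so that one of the two points is the origin, the planes through $\{0, v\}$ with $v \neq 0$ are exactly the $m$-dimensional linear subspaces containing $v$; by Lemma \ref{lem:subspace}(1) there are ${n-1 \choose m-1}_p$ of them. Hence the off-diagonal term is $|E|(|E|-1){n-1 \choose m-1}_p$ and the diagonal term is $|E|{n \choose m}_p$, giving
\[
\e(E, A(n,m)) = |E|{n \choose m}_p + |E|(|E|-1){n-1 \choose m-1}_p.
\]
To match \eqref{eq:mur} I then apply the Pascal-type identity Lemma \ref{lem:iidentity}(3), ${n \choose m}_p = {n-1 \choose m-1}_p + p^{m}{n-1 \choose m}_p$: substituting this turns $|E|{n \choose m}_p + |E|(|E|-1){n-1 \choose m-1}_p$ into $|E|p^{m}{n-1 \choose m}_p + |E|^{2}{n-1 \choose m-1}_p$, as claimed.

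For the promised Fourier-analytic second proof, I would instead sum the identity of Lemma \ref{lem:fff} over a full system of representatives: fixing $W \in G(n, n-m)$, Lemma \ref{lem:fff} gives $\sum_{j} |E \cap (x_{W,j}+W)|^{2} = p^{-(n-m)}\sum_{\xi \in Per(W)} |\E(\xi)|^{2}$ (here the roles of $m$ and $n-m$ are interchanged relative to the statement of the lemma, so one applies it with $m$ replaced by $n-m$). Summing over all $W \in G(n,m)$ — noting $Per$ sets up a bijection between $G(n,m)$ and $G(n,n-m)$ — and then using Lemma \ref{lem:subspace}(2) to count, for each $\xi \neq 0$, the number of subspaces $V$ with $\xi \in V$ (equivalently $\xi \in Per(W)$), together with the Plancherel identity $\sum_{\xi}|\E(\xi)|^2 = p^n |E|$ to handle the $\xi = 0$ term separately, reassembles the right-hand side of \eqref{eq:mur}.

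The argument is essentially bookkeeping; the only place to be careful is the exponent of $p$ and which Gaussian coefficient appears — i.e., keeping straight the dimension ($m$ versus $n-m$) of the planes being counted and applying the correct form, (2) versus (3), of the Pascal recursion in Lemma \ref{lem:iidentity}. In the Fourier proof the analogous bookkeeping point is tracking the normalizing power $p^{-(n-m)}$ against the count ${n-1 \choose m}_p$ from Lemma \ref{lem:subspace}(2); a quick sanity check is that both proofs must agree, and that the $|E|^2$ coefficient ${n-1 \choose m-1}_p$ is forced by the leading-order behavior $\e(E,A(n,m)) \approx |E|^2 \cdot p^{m(n-m)} \cdot p^{m}/p^{n} = |E|^2 p^{m(m-1)}\cdot p^{(m)(n-m)}/\cdots$, which one can reconcile with ${n-1\choose m-1}_p \approx p^{(m-1)(n-m)}$.
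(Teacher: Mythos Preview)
Your proposal is correct and matches the paper's two proofs essentially line for line: the combinatorial proof expands the square into diagonal and off-diagonal pair counts and then applies Lemma~\ref{lem:iidentity}(3), and the Fourier proof sums Lemma~\ref{lem:fff} over $G(n,m)$, separates $\xi=0$ from $\xi\neq 0$, and invokes Lemma~\ref{lem:subspace}(2) together with Plancherel. The only slip is the phrase ``fixing $W \in G(n,n-m)$'' in your Fourier sketch---as your own parenthetical and the subsequent ``summing over all $W\in G(n,m)$'' make clear, you mean $W\in G(n,m)$ there (so that cosets of $W$ are the elements of $A(n,m)$ and the normalizing factor is $p^{-(n-m)}=p^{m-n}$).
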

\begin{proof}
First proof. Recall that we denote by $F(x)$ the characteristic function of the subset $F\subset \F$. Applying the identities in Lemma \ref{lem:iidentity} (Gaussian  coefficient) and Lemma \ref{lem:subspace}, we have
\begin{equation*}
\begin{aligned}
\e(E, A(n,m))&= \sum_{W\in A(n,m) }|E \cap W|^{2}\\
&=\sum_{W \in A(n,m)} \left(\sum_{x\in E}W(x) \right)^{2}\\
&=\sum_{W \in A(n,m)} \left(\sum_{x\in E}W(x)+\sum_{x\neq y \in E} W(x)W(y) \right)\\
&=|E|{n \choose m}_{p}+|E|(|E|-1){n-1 \choose m-1}_{p}\\
&=|E|p^{m}{n-1 \choose m}_{p}+|E|^{2}{n-1 \choose m-1}_{p}.
\end{aligned}
\end{equation*}

Now we use a Fourier approach to give a different proof. Applying  Lemma \ref{lem:fff}, Lemmas \ref{lem:iidentity}-\ref{lem:subspace}, and  Plancherel identity for subset $E\subset \F$, we obtain 
\begin{equation*}
\begin{aligned}
\e(E, A(n,m))
&=\sum_{W\in G(n,m)}\sum_{j=1}^{p^{m}}|E\cap (x_{W,j}+W)|^{2}\\
& =p^{m-n}\sum_{W\in G(n,m)} \sum_{\xi\in Per(W)}|\E(\xi)|^{2}\\
&=p^{m-n} \left({n-1 \choose m}_{p}\sum_{\xi \neq 0}|\E(\xi)|^{2} +{n \choose m}_{p}|\E(0)|^{2} \right)\\
&=|E|p^{m}{n-1 \choose m}_{p}+|E|^{2}{n-1 \choose m-1}_{p}.
\end{aligned}
\end{equation*}
Thus we complete the proof.
\end{proof}

I thank the anonymous referee for suggesting the following definition of projections in $\F$. Let $E\subset \F$ and $V\in G(n,m)$. Then the projection of $E$ to $V$ is defined as 
\[
P_{V}(E):=\{x+Per(V): E\cap(x+Per(V))\neq \emptyset, x\in \F\}.
\]
We intend to show that  our results also holds under this definition.

By using our notation we obtain
\[
P_{V}(E)=\pi^{Per(V)}(E).
\]
Note that the rank-nullity theorem clams $\dim V+ \dim Per(V)=n$. Now we show that for $W\in G(n,n-m)$ and $E\subset \F$，
\[
P_{Per(W)}(E)=\pi^{W}(E).
\]
This follows from the fact that $Per(Per(W))=W$ for any subspace $W\subset \F$. First note that the definition of $Per(W)$ implies $W\subset Per(Per(W))$. By applying rank-nullity theorem, we obtain
\[
\dim Per(Per(W))=n-\dim Per(W)=\dim W,
\] 
and hence $Per(Per(W))=W$. Therefore, for $E\subset \F$ and $N<p^{m}$, we obtain
\[
|\{V\in G(n,m): |P_{V}(E)|\leq N\}|=|\{W\in G(n,n-m): |\pi^{W}(E)|\leq N\}|.
\]
Thus we conclude that our results  holds under the above definition.

\medskip
\textbf{Acknowledgements.} 
I would like to thank Tuomas Orponen for sharing his idea on the projections of finite points in the plane.  I thank 
the anonymous referee for carefully reading the manuscript and giving excellent
comments, and thus improving the quality of this article. I am grateful  for being  supported by the Vilho, Yrj\"o, and Kalle V\"ais\"al\"a foundation.

%\section{Projections of special sets}

%\section{Salem sets}

%\subsection{Random sets}

%\subsection{Product sets}

%\section{Further results}

\end{document}